\def\r{\mathbb R}
\def\h{\mathbb H}
\def\s{\mathbb S}
\def\z{\mathbb Z}
\def\n{\mathbb N}
\def\c{\mathbb C}
\newcommand{\dvol}{\,dvol}
\DeclareMathOperator{\Hyperg}{Hyperg}
\DeclareMathOperator{\residue}{Res}
\DeclareMathOperator{\divergence}{div}
\newtheorem{theorem}{Theorem}[section]
\newtheorem{definition}[theorem]{Definition}
\newtheorem{proposition}[theorem]{Proposition}
\newtheorem{lemma}[theorem]{Lemma}
\newtheorem{corollary}[theorem]{Corolary}
\numberwithin{equation}{section}
\theoremstyle{remark}
\newtheorem{remark}[theorem]{Remark}
\begin{document}

\title{Isolated singularities for a semilinear equation for the \\fractional Laplacian arising in conformal geometry}

\author{DelaTorre, Azahara\footnote{Universitat Polit\`ecnica de Catalunya, ETSEIB-MA1, Av. Diagonal 647, 08028 Barcelona, Spain, azahara.de.la.torre@upc.edu. Supported by MINECO grants MTM2011-27739-C04-01 and  FPI-2012 fellowship. The author is part of the Catalan research group 2014SGR1083.}\\
Gonz\'alez, Mar\'ia del Mar\footnote{Universitat Polit\`ecnica de Catalunya, ETSEIB-MA1, Av. Diagonal 647, 08028 Barcelona, Spain, mar.gonzalez@upc.edu. Supported by MINECO grant MTM2011-27739-C04-01. The author is part of the Catalan research group 2014SGR1083.}}

\date{}
\maketitle

\abstract{We introduce the study of isolated singularities for a semilinear equation involving the fractional Laplacian. In conformal geometry, it is equivalent to the study of singular metrics with constant fractional curvature. Our main ideas are: first, to set the problem into a natural geometric framework, and second, to perform some kind of phase portrait study for this non-local ODE.}

\section{Introduction and statement of results}

We consider the problem of finding radial solutions for the fractional Yamabe problem in $\r^n$, $n\geq 2$, with an isolated singularity at the origin. This means to look for positive, radially symmetric solutions of
\begin{equation}\label{equation0}(-\Delta)^{\gamma}w=c_{n,\gamma}w^{\frac{n+2\gamma}{n-2\gamma}}\text{ in }\r^n \setminus\ \{0\},\end{equation}
where $c_{n,\gamma}$ is any positive constant that, without loss of generality, will be normalized as in Proposition \ref{cte}. Unless we state the contrary, $\gamma\in(0,\tfrac{n}{2})$. In geometric terms, given the Euclidean metric $|dx|^2$ on $\mathbb R^n$, we are looking for a conformal metric \begin{equation}\label{conformal-change}g_w=w^{\frac{4}{n-2\gamma}}|dx|^2,\ w>0,\end{equation} with positive constant fractional curvature $Q^{g_w}_\gamma\equiv c_{n,\gamma}$, that is radially symmetric and has a prescribed singularity at the origin.

Because of the well known extension theorem for the fractional Laplacian \cite{CaffarelliSilvestre,CaseChang,MarChang}  we can assert that equation \eqref{equation0} for the case $\gamma\in(0,1)$ is equivalent to the boundary reaction problem
\begin{equation}\label{equation1}\left\{
\begin{split}
-\divergence(y^{a}\nabla W)=0&\text{ in } \r^{n+1}_+,\\
W=w&\text{ on }\r^n\setminus\{0\},\\
-\tilde{d}_{\gamma}\lim_{y\rightarrow 0}y^a\partial_yW=c_{n,\gamma}w^{\frac{n+2\gamma}{n-2\gamma}}&\text{ on }\r^n\setminus\{0\}.
\end{split}\right.
\end{equation}
Here the constant $\tilde d_{\gamma}$ is defined in \eqref{dtg}. We note that it is possible to write $W=K_\gamma *_x w$, where $K_\gamma$ is the Poisson kernel \eqref{Poisson-kernel} for this extension problem.

It is known that $w_1(r)=r^{-\frac{n-2\gamma}{2}}$,
together with $W_1=K_\gamma *_x w_1$, is an explicit solution for \eqref{equation1}; this fact will be proved in Proposition \ref{cte} and as a consequence we will obtain the normalization of the constant $c_{n,\gamma}$.  Therefore, $w_1$ is the model solution for isolated singularities, and it corresponds to the cylindrical metric.

In the recent paper \cite{CaffarelliJinSireXiong} Caffarelli, Jin, Sire and Xiong characterize all the nonnegative solutions to \eqref{equation1}.
Indeed, let $W$ be any nonnegative solution of \eqref{equation1} in $\r^{n+1}_+$ and suppose that the origin 
 is not a removable singularity. Then, writing $r=|x|$ for the radial variable in $\mathbb R^n$, we must have that
  $$W(x,t)=W(r,t)\text{ and }\partial_r W(r,t)<0\quad \forall\ 0<r<\infty.$$
In addition, they also provide their asymptotic behavior. More precisely, if $w=W(\cdot,0)$ denotes the trace of $W$, then near the origin one must have that
\begin{equation}\label{asymptotics}
c_1r^{-\tfrac{n-2\gamma}{2}}\leq w(x)\leq c_2r^{-\tfrac{n-2\gamma}{2}},
\end{equation}
where $c_1$, $c_2$ are positive constants.

 We remark that if the singularity at the origin is removable, all the entire solutions for \eqref{equation1} have been completely classified by Jin, Li and Xiong  \cite{JinLiXiong} and Chen, Li and Ou \cite{ChenLiOu}, for instance. In particular, they must be the standard ``bubbles"
\begin{equation}\label{sphere}
w(x)=c\left(\frac{\lambda}{\lambda^2+|x-x_0|}\right)^{\frac{n-2\gamma}{2}},\quad c,\lambda>0, \ x_0\in\mathbb R^n.
\end{equation}

In this paper we initiate the study of positive radial solutions for \eqref{equation0}. It is clear from the above that we should look for solutions of the form
\begin{equation}\label{wv}
w(r)=r^{-\frac{n-2\gamma}{2}}v(r)\text{ on } \r^n\setminus\{0\},
\end{equation}
 for some  function $0<c_1\leq v\leq c_2$. In the classical case $\gamma=1$, equation \eqref{equation0} reduces to a standard second order ODE. However, in the fractional case \eqref{equation0} becomes a fractional order ODE, so classical methods cannot be directly applied here.

 The objective of this paper is two-fold: first, to use the natural interpretation of problem \eqref{equation0} in conformal geometry in order to obtain information about isolated singularities for the operator $(-\Delta)^\gamma$ from the scattering theory definition. And second, to take a dynamical system approach to explore how much of the standard ODE study can be generalized to the PDE \eqref{equation1}. In the forthcoming paper \cite{paper2}, that is a joint paper together also with M. del Pino and J. Wei, we conclude this study by using a variational method directly to construct solutions to \eqref{equation0} that generalize the well known Delaunay (sometimes called Fowler) solutions for the scalar curvature problem. Both papers complement each other.\\

Before stating our results we need to introduce the geometric setting. On a general Riemannian manifold $(M^n,g)$, the fractional curvature $Q^{g}_\gamma$ is defined from the conformal fractional Laplacian $P^{g}_\gamma$ as $Q^{g}_\gamma=P^{g}_\gamma(1)$, and it is a nonlocal version of the scalar curvature (which corresponds to the local case $\gamma=1$). The conformal fractional Laplacian is constructed from scattering theory on the conformal infinity $M^n$ of a conformally compact Einstein manifold $(X^{n+1},g^+)$ as a generalized Dirichlet-to-Neumann operator for the eigenvalue problem
\begin{equation}\label{scattering-introduction}-\Delta_{g^+}U-s(n-s)U=0\ \text{in}\ X, \quad s=\tfrac{n}{2}+\gamma,\end{equation}
and it is a (non-local) pseudo-differential operator of order $2\gamma$.
This construction is a natural one from the point of view of the AdS/CFT correspondence in Physics (\cite{AdS/CFT,Witten}).

 The main property of $P^g_\gamma$ is its conformal invariance; indeed, for a conformal change of metric $g_w=w^{\frac{4}{n-2\gamma}}g$, we have that
\begin{equation}\label{conformal-transformation}
P_\gamma^{g_w} (f)=w^{-\frac{n+2\gamma}{n-2\gamma}} P^{g}_\gamma(fw),\quad \text{for all } f\text{ smooth,}
\end{equation}
which, in particular when $f=1$, reduces to the fractional curvature equation
\begin{equation}\label{Qequation}
P_\gamma^{g}(w)=Q_\gamma^{g_w} w^{\frac{n+2\gamma}{n-2\gamma}}.
\end{equation}
The fractional Yamabe problem for equation \eqref{Qequation} on compact manifolds was considered in \cite{MarQing,MarWang}, while the fractional Nirenberg problem was introduced in \cite{JinLiXiong,JinLiXiongII,FangGonzalez}. In addition, the study of the singular version for the fractional Yamabe problem was initiated in \cite{GonzalezMazzeoSire}, where the authors construct model singular solutions. Here we look at the case of an isolated singularity.

We note that, in the Euclidean case, $P^{|dx|^2}_\gamma$ coincides with the standard fractional Laplacian $(-\Delta)^{\gamma}$, and thus, imposing the constant curvature condition in equation \eqref{Qequation} yields our original problem \eqref{equation0}. Moreover, one can check that the extension problem \eqref{equation1} comes from the scattering problem \eqref{scattering-introduction} when we take $g^+$ to be the hyperbolic metric $g^+=\frac{dy^2+|dx|^2}{y^2}$. See Section \ref{section:preliminaries} for a review of the known results on the fractional conformal Laplacian.\\

We present now the natural coordinates for studying isolated singularities of \eqref{equation0}. Let $M=\mathbb R^n\backslash \{0\}$ and use the Emden-Fowler change of variable $r=e^{-t}$, $t\in\mathbb R$; with some abuse of the notation, we write $v=v(t)$. Then, in radial coordinates, $M$ may be identified with the manifold $\mathbb R\times \mathbb S^{n-1}$, for which the Euclidean metric is written as
\begin{equation}\label{g0:introduction}|dx|^2=dr^2+r^2 g_{\mathbb S^{n-1}}=e^{-2t}[dt^2+g_{\mathbb S^{n-1}}]=:e^{-2t}g_0.\end{equation}
Since the metrics $|dx|^2$ and $g_0$ are conformally related, we prefer to use $g_0$ as a background metric and thus any conformal change \eqref{conformal-change} may be rewritten as
$$g_v=w^{\frac{4}{n-2\gamma}}|dx|^2=v^{\frac{4}{n-2\gamma}}g_0,$$
where we have used relation \eqref{wv}.
Looking at the conformal transformation property for $P^g_\gamma$ given in \eqref{conformal-transformation} and relation \eqref{wv} again, it is clear that
\begin{equation}\label{relation-introduction}
P_\gamma^{g_0}(v)=r^{\frac{n+2\gamma}{2}} P^{|dx|^2}_{\gamma}(r^{-\frac{n-2\gamma}{2}} v)=r^{\frac{n+2\gamma}{2}}
(-\Delta)^{\gamma} w,
\end{equation}
and thus the original problem \eqref{equation0} is equivalent to the following one: fixed $g_0$ as a background metric on $\mathbb R\times \mathbb S^{n-1}$, find a conformal metric $g_v=v^{\frac{4}{n-2\gamma}}g_0$ of positive constant fractional curvature $Q^{g_v}_\gamma$, i.e., find a positive smooth solution $v$ for
\begin{equation}\label{equation2}P_\gamma^{g_0}(v)=c_{n,\gamma} v^{\frac{n+2\gamma}{n-2\gamma}}\quad \text{on}\quad \mathbb R\times \mathbb S^{n-1}.\end{equation}

The point of view of this paper is to consider problem \eqref{equation2} instead of \eqref{equation0}, since it allows for a simpler analysis. In our first theorem we compute  the principal symbol of the operator $P_\gamma^{g_0}$ on $\mathbb R\times \mathbb S^{n-1}$ using the spherical harmonic decomposition for $\mathbb S^{n-1}$. With some abuse of notation, let $\mu_k=-k(k+n-2)$, $k=0,1,2,...$ be the eigenvalues of $\Delta_{\s^{n-1}}$, repeated according to multiplicity. Then, any function on $\mathbb R\times \mathbb S^{n-1}$ may be decomposed as $\sum_{k} v_k(t) E_k$, where $\{E_k\}$ is a basis of eigenfunctions. We show that the operator $P_\gamma^{g_0}$ diagonalizes under such eigenspace decomposition, and moreover, it is possible to calculate the Fourier symbol of each projection. Let
\begin{equation}\label{fourier}
\hat{v}(\xi)=\frac{1}{\sqrt{2\pi}}\int_{\r}e^{-i\xi \cdot t} v(t)\,dt
\end{equation}
be our normalization for the one-dimensional Fourier transform.
\begin{theorem}\label{th1}
 Fix $\gamma\in (0,\tfrac{n}{2})$ and let $P^k_{\gamma}$ be the projection of the operator $P^{g_0}_\gamma$ over each eigenspace $\langle E_k\rangle$. Then
$$\widehat{P_\gamma^k (v_k)}=\Theta^k_\gamma(\xi) \,\widehat{v_k},$$
and this Fourier symbol is given by
\begin{equation}\label{symbol}
\Theta^k_{\gamma}(\xi)=2^{2\gamma}\frac{\left|\Gamma\left(\tfrac{1}{2}+\tfrac{\gamma}{2}
+\tfrac{\sqrt{(\tfrac{n}{2}-1)^2-\mu_k}}{2}+\tfrac{\xi}{2}i\right)\right|^2}
{\left|\Gamma\left(\tfrac{1}{2}-\tfrac{\gamma}{2}+\tfrac{\sqrt{(\tfrac{n}{2}-1)^2-\mu_k}}{2}
+\tfrac{\xi}{2}i\right)\right|^2}.
\end{equation}
\end{theorem}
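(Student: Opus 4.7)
The plan is to exploit the isometries of the background metric $g_0=dt^2+g_{\mathbb{S}^{n-1}}$. Since $g_0$ is invariant under the joint action of translations in $t$ and rotations of $\mathbb{S}^{n-1}$, and the scattering construction of $P_\gamma^{g_0}$ is natural with respect to isometries, the operator $P_\gamma^{g_0}$ commutes with this group action. Consequently, it is simultaneously diagonalized in the basis $\{e^{i\xi t}E_k(\theta)\}_{\xi\in\mathbb{R},\,k\geq 0}$, so on each spherical harmonic subspace $\langle E_k\rangle$ it acts as a Fourier multiplier in the $t$ variable. It therefore suffices to identify the multiplier $\Theta_\gamma^k(\xi)$ by testing $P_\gamma^{g_0}\bigl(v_k(t)E_k(\theta)\bigr)=P_\gamma^k(v_k)(t)E_k(\theta)$ against the single test function $v_k(t)=e^{i\xi t}$.

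The computation is transferred to the Euclidean setting via the conformal covariance identity \eqref{relation-introduction}. With $r=e^{-t}$ we have $e^{i\xi t}=r^{-i\xi}$, and therefore
\begin{equation*}
P_\gamma^{g_0}\bigl(e^{i\xi t}E_k(\theta)\bigr)=r^{\tfrac{n+2\gamma}{2}}(-\Delta)^{\gamma}\bigl(r^{-\tfrac{n-2\gamma}{2}-i\xi}E_k(\theta)\bigr).
\end{equation*}
The problem now reduces to evaluating the Euclidean fractional Laplacian on the product of a complex power of $r=|x|$ with a spherical harmonic on $\mathbb{R}^n$.

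The core analytic step is the classical identity, valid initially for real $s$ in a suitable strip and then extended by meromorphic continuation,
\begin{equation*}
(-\Delta)^\gamma\bigl(|x|^{-s}E_k(\theta)\bigr)=2^{2\gamma}\,\frac{\Gamma\bigl(\tfrac{s+2\gamma+k}{2}\bigr)\Gamma\bigl(\tfrac{n-s+k}{2}\bigr)}{\Gamma\bigl(\tfrac{s+k}{2}\bigr)\Gamma\bigl(\tfrac{n-s-2\gamma+k}{2}\bigr)}\,|x|^{-s-2\gamma}E_k(\theta).
\end{equation*}
I would derive this from the Riesz potential representation of $(-\Delta)^{-\gamma}$ (or equivalently its singular integral formula), separating variables: the angular part collapses by the Funk--Hecke formula to a Gegenbauer projection that isolates the $E_k$ component, while the radial part reduces to a Beta-type integral whose evaluation supplies the four Gamma factors above.

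With this identity in hand, substituting $s=\tfrac{n-2\gamma}{2}+i\xi$ makes the output power $|x|^{-s-2\gamma}=r^{-\tfrac{n+2\gamma}{2}-i\xi}$ cancel exactly against the conformal prefactor $r^{\tfrac{n+2\gamma}{2}}$, returning $e^{i\xi t}E_k$ times a scalar. A short arithmetic check, writing $\nu_k=\sqrt{(\tfrac{n}{2}-1)^2-\mu_k}=k+\tfrac{n}{2}-1$, rewrites the four Gamma arguments in the symmetric form $\tfrac{1}{2}\pm\tfrac{\gamma}{2}+\tfrac{\nu_k}{2}\pm\tfrac{i\xi}{2}$; pairing numerator and denominator factors by $\Gamma(z)\Gamma(\bar z)=|\Gamma(z)|^2$ yields exactly \eqref{symbol}. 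The main obstacle is the middle step: the integrals defining $(-\Delta)^\gamma|x|^{-s}$ do not converge absolutely on the critical line $\mathrm{Re}\,s=\tfrac{n-2\gamma}{2}$, so one must interpret the action in the sense of tempered distributions and justify the analytic continuation from a real strip where the computation is elementary.
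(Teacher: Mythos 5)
Your proposal is correct but follows a genuinely different route from the paper's proof. The paper works intrinsically on the hyperbolic extension: it writes the scattering equation \eqref{eigp} in the coordinates $(\sigma,t,\theta)$ of \eqref{HYmodel}, changes variables to $z=\tanh\sigma$, projects onto each $\langle E_k\rangle$, Fourier-transforms in $t$, and then solves the resulting ODE \eqref{problemphi} explicitly in terms of hypergeometric functions; the symbol $\Theta_\gamma^k(\xi)$ is then read off as the ratio $\beta\alpha^{-1}$ of coefficients in the two-term expansion near $\rho=0$. You instead push everything to the flat side via the conformal covariance identity \eqref{relation-introduction} and invoke the classical Fourier--Bochner formula for $(-\Delta)^\gamma$ acting on $|x|^{-s}E_k$; the arithmetic with $s=\tfrac{n-2\gamma}{2}+i\xi$ and $\sqrt{(\tfrac{n}{2}-1)^2-\mu_k}=k+\tfrac{n}{2}-1$ then produces exactly \eqref{symbol}. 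Both approaches are diagonalizing the same operator in the same basis $\{e^{i\xi t}E_k\}$, and your numerical check against $c_{n,\gamma}$ from Proposition \ref{cte} at $k=0$, $\xi=0$ confirms the normalizations agree. Your route is shorter because it imports the formula for $(-\Delta)^\gamma(|x|^{-s}E_k)$ rather than deriving the multiplier from scratch; the paper's route is self-contained and, importantly, delivers the explicit hypergeometric solution $\varphi$ as a by-product — this is what Corollary \ref{corollary:rho*} uses to compute the special defining function $\rho^*$, and what Section \ref{section:linear} reuses for the linearization, so the paper would need that computation anyway.

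You have correctly flagged the technical subtlety you would need to address: the covariance identity \eqref{relation-introduction}, as built from the scattering construction, is stated for smooth boundary data, and when $v=e^{i\xi t}E_k$ the corresponding Euclidean trace $w=|x|^{-(n-2\gamma)/2-i\xi}E_k$ is a (locally integrable) tempered distribution with no decay, so $(-\Delta)^\gamma w$ must be interpreted distributionally and the Gamma-function formula justified on the critical line $\operatorname{Re}s=\tfrac{n-2\gamma}{2}$ by meromorphic continuation from the strip where the Riesz/Bochner computation is absolutely convergent. One should check that none of the four Gamma arguments $\tfrac{n}{4}\pm\tfrac{\gamma}{2}+\tfrac{k}{2}\pm\tfrac{i\xi}{2}$ hit a pole, which indeed holds since their real parts are strictly positive for $\gamma\in(0,\tfrac{n}{2})$ and $k\geq 0$. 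The paper sidesteps all of this by never leaving the $g_0$ coordinates, where the data $e^{i\xi t}E_k$ is smooth and bounded and the scattering problem is posed directly; this is worth noting, because your approach implicitly asserts that the scattering-theoretic $P^{g_0}_\gamma$ applied to a bounded non-decaying function agrees with the analytically continued Euclidean multiplier, and that identification deserves a short argument rather than being taken for granted.
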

Since we are mainly interested in radial solutions $v=v(t)$, in many computations we will just need to consider the symbol for the first eigenspace $k=0$
 (that corresponds to the constant eigenfunctions):
\begin{equation*}\label{symbolk0}
\Theta^0_{\gamma}(\xi)=2^{2\gamma}\frac{\left|\Gamma(\tfrac{n}{4}+\tfrac{\gamma}{2}+\tfrac{\xi}{2}i)\right|^2}
{\left|\Gamma(\tfrac{n}{4}-\tfrac{\gamma}{2}+\tfrac{\xi}{2}i)\right|^2}.
\end{equation*}

 Now we look at the question of finding smooth solutions $v=v(t)$, $0<c_1\leq v\leq c_2$, for equation \eqref{equation2}, and we expect to have periodic solutions. The local case $\gamma=1$, presented below, provides some motivation for this statement and as we have mentioned, in the forthcoming paper \cite{paper2} we construct such periodic solutions from the variational point of view. Here we look at the geometrical interpretation of such solutions and provide a dynamical system approach for the problem.

 First, note that
 \begin{equation}\label{equation3}-\Delta w=c_{n,1}w^{\frac{n+2}{n-2}}, \quad w>0,\end{equation}
  is the constant scalar curvature equation for the metric $g_w$. It is well known  (\cite{Caffarelli-Gidas-Spruck,KMPS}) that positive solutions of equation \eqref{equation3} in $\mathbb R^n\backslash\{0\}$ must be radially symmetric and, if the singularity at the origin is not removable, then the solution must behave as in \eqref{asymptotics}.
After the change \eqref{wv}, our  equation \eqref{equation3} reduces to a standard second order ODE for the function $v=v(t)$:
 \begin{equation}\label{ODE1}
 -\ddot{v}+\tfrac{(n-2)^2}{4}v=\tfrac{(n-2)^2}{4}v^{\frac{n+2}{n-2}},\quad v>0.
 \end{equation}
 This equation it is easily integrated and the analysis of its phase portrait gives that all bounded solutions must be periodic (see, for instance, the lecture notes \cite{Schoen:notas}). More precisely, the Hamiltonian
 \begin{equation}\label{Hamiltonian1}H_1(v,\dot{v}):= \tfrac{1}{2}\dot v^2+\tfrac{(n-2)^2}{4}\left(\tfrac{(n-2)}{2n} v^{\frac{2n}{n-2}}-\tfrac{1}{2}v^2\right)\end{equation}
 is preserved along trajectories. Thus looking at its level sets we conclude that there exists a family of periodic solutions $\{v_L\}$ of periods $L\in(L_1,\infty)$. Here
 \begin{equation}\label{L1}
 L_1=\tfrac{2\pi}{\sqrt{n-2}}
 \end{equation}
 is the minimal period and it can be calculated from the linearization at the equilibrium solution $v_1\equiv 1$. These $\{v_L\}$ are known as the Fowler (\cite{Fowler}) or Delaunay solutions for the scalar curvature.

 Delaunay solutions are, originally,  rotationally symmetric surfaces with constant mean curvature and they have been known for a long time (\cite{Delaunay,Eells}). In addition, let $\Sigma\in\mathbb R^3$ be a noncompact embedded constant mean curvature surfaces with $k$ ends. It is known that any of such ends must be asymptotic to one of the Delaunay surfaces (\cite{Korevaar-Kusner-Solomon,Meeks:CCM}), which is very similar to what happens in the constant scalar curvature setting (see, for instance \cite{KMPS}), where any positive solution of the constant scalar curvature equation \eqref{equation3} must be asymptotic to a precise deformation of one of the $v_L$. Delaunay-type solutions have been also shown to exist for the first eigenvalue in Serrin's problem \cite{Sicbaldi}.

Let us comment here that the constant mean curvature case is very related to the constant $Q_{1/2}$ case. This is because, fixed $(X^{n+1},\bar g)$ a compact manifold with boundary $M^n$ such that there exists a defining function $\rho$ so that $g^+=\bar g/\rho^2$ is asymptotically hyperbolic and has constant scalar curvature $R_{g^+}=-n(n+1)$, then the conformal fractional Laplacian on $M$ with respect to the metric $g=\bar g|_{M}$ is given by $P^g_{1/2}=\frac{\partial}{\partial \nu}+\frac{n-1}{2} \mathcal H_g$, where $\frac{\partial}{\partial \nu}$ is the Neumann derivative for the harmonic extension and $\mathcal H_g$ the mean curvature of the boundary (\cite{Guillarmou-Guillope,MarChang}). Thus $Q_{1/2}^g$ coincides in this particular setting with $\mathcal H_g$ up to a multiplicative constant. However, there is a further restriction since in the present paper we consider only rotationally symmetric metrics on the boundary and thus, not allowing full generality for the original constant mean curvature problem.\\

Going back to \eqref{ODE1}, we would like to understand how much of this picture is preserved in the  non-local case, so we look for radial solutions of equation \eqref{equation0}, which becomes a fractional order ODE. On the one hand, we formulate the problem through the extension \eqref{equation1}. This point of view has the advantage that the new equation is local (and degenerate elliptic) but, on the other hand, it is a PDE with a non-linear boundary condition. Note that because we will be using the extension from Theorem \ref{equivaexten} for the calculation of the fractional Laplacian, we need to restrict ourselves to $\gamma\in(0,1)$ at this stage.

The first difficulty we encounter with our approach is how to write the original extension equation \eqref{equation1} in a natural way after the change of variables $r=e^{-t}$.
Looking at the construction of the conformal fractional Laplacian from the scattering equation \eqref{scattering-introduction} on a conformally compact Einstein manifold $(X^{n+1},g^+)$, we need to find a parametrization of hyperbolic space in such a way that its conformal infinity $M^n=\{\rho=0\}$ is precisely $(\mathbb R\times\mathbb S^{n-1}, g_0)$. The precise metric on the extension is $g^+=\bar g/\rho^2$
for
\begin{equation}\label{metric_introduction}
\bar{g}=d\rho^2+\left(1+\tfrac{\rho^2}{4}\right)^2dt^2+\left(1-\tfrac{\rho^2}{4}\right)^2 g_{\s^{n-1}},\end{equation}
where  $\rho\in(0,2)$ and $t\in\mathbb R$. The motivation for this change of metric will be made clear in Section \ref{relaclas}. In the language of Physics, $g^+$ is the Riemannian version of AdS space, a model solution of Einstein's equations which is important in the setting of the AdS/CFT correspondence since AdS space-time is the right background when studying thermodynamically stable black holes \cite{Hawking-Page,Witten}.

Rewriting the equations in this new metric, our original equation \eqref{equation1}, written in terms of the change \eqref{wv}, is equivalent to the  extension problem
\begin{equation}\label{fracyamextv}\left\{
\begin{split}
-\divergence_{\bar{g}}(\rho^a\nabla_{\bar{g}} V)+ E(\rho)V&=0\ \text{ in } (X^{n+1},\bar g),\\
V&=v \text{ on }\{\rho=0\},\\
-\tilde{d}_{\gamma}\lim_{\rho\rightarrow 0}\rho^{a}\partial_{\rho}V&=c_{n,\gamma}v^{\frac{n+2\gamma}{n-2\gamma}}\text{ on }\{\rho=0\},
\end{split}
\right.\end{equation}
where the expression for the lower order term $E(\rho)$ will be given in \eqref{Erho}. We look for solutions $V$ to \eqref{fracyamextv} that only depend on $\rho$ and $t$, and that are bounded between two positive constants.

We show first that equation \eqref{fracyamextv}  exhibits a Hamiltonian quantity that generalizes \eqref{Hamiltonian1}.

\begin{theorem}\label{thctthamiltonian}
Fix $\gamma\in(0,1)$. Let $V$ be a solution of \eqref{fracyamextv} only depending on $t$ and $\rho$. Then the Hamiltonian quantity
\begin{equation}\label{Hamiltonian}
H_\gamma(t):=\frac{1}{2}\int_{0}^{2}  \rho^{a}\left\{e_1(\rho)(\partial_{t}V)^2
-e(\rho)(\partial_\rho V)^2
-e_2(\rho)V^2\right\}\,d\rho
+C_{n,\gamma}v^{\frac{2n}{n-2\gamma}},
\end{equation}
is constant with respect to $t$. Here we write
\begin{equation}\label{e}
\begin{split}
&e_1(\rho)=\left( 1+\tfrac{\rho^2}{4}\right)^{-1}\left(1-\tfrac{\rho^2}{4}\right)^{n-1},\\
&e_2(\rho)=
\tfrac{n-1+a}{4}\left(1-\tfrac{\rho^2}{4}\right)^{n-2}\left(n-2+n\tfrac{\rho^2}{4}\right),\\
&e(\rho)=\left( 1+\tfrac{\rho^2}{4}\right)\left(1-\tfrac{\rho^2}{4}\right)^{n-1},
\end{split}\end{equation}
and the  constant is given by
\begin{equation}\label{constantC}
C_{n,\gamma}=\frac{n-2\gamma}{2n}\,\frac{c_{n,\gamma}}{\tilde{d}_{\gamma}}.
\end{equation}
\end{theorem}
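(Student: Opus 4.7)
The strategy is to prove $\frac{d}{dt}H_\gamma(t)\equiv 0$ by a Pohozaev-type computation: differentiate in $t$ under the integral, integrate by parts in $\rho$, and then substitute the extension equation so that all terms either cancel in pairs or assemble into a pointwise identity built into the definition of $E(\rho)$.

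Carrying this out, I would first differentiate $H_\gamma$ in $t$ under the integral sign. Using $\tfrac{2n}{n-2\gamma}-1=\tfrac{n+2\gamma}{n-2\gamma}$ together with the normalization \eqref{constantC}, which is equivalent to $C_{n,\gamma}\cdot\tfrac{2n}{n-2\gamma}=\tfrac{c_{n,\gamma}}{\tilde d_\gamma}$, this produces
\[
\frac{d}{dt}H_\gamma = \int_0^{2} \rho^a\bigl\{e_1\,\partial_t V\,\partial_{tt}V - e\,\partial_\rho V\,\partial_{t\rho}V - e_2\,V\,\partial_t V\bigr\}\,d\rho \;+\; \frac{c_{n,\gamma}}{\tilde d_{\gamma}}\, v^{\frac{n+2\gamma}{n-2\gamma}}\,\partial_t v .
\]
Next I would integrate the middle term by parts in $\rho$. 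The boundary at $\rho=2$ contributes nothing since $e(2)=(1+1)(1-1)^{n-1}=0$; at $\rho=0$ one has $e(0)=1$ and the Neumann condition in \eqref{fracyamextv} gives $\lim_{\rho\to 0}\rho^{a}\partial_\rho V=-\tfrac{c_{n,\gamma}}{\tilde d_\gamma}v^{\frac{n+2\gamma}{n-2\gamma}}$, producing a contribution $-\tfrac{c_{n,\gamma}}{\tilde d_\gamma}v^{\frac{n+2\gamma}{n-2\gamma}}\partial_t v$ that exactly cancels the explicit boundary term above. This is precisely the reason for the normalization \eqref{constantC}.

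I would then rewrite the PDE in \eqref{fracyamextv} in terms of the coefficient functions appearing in $H_\gamma$. Since $\sqrt{\det\bar g}=e(\rho)\sqrt{\det g_{\s^{n-1}}}$ and $\bar g^{tt}=(1+\rho^2/4)^{-2}$, for $V=V(\rho,t)$ a direct computation of the weighted divergence yields
\[
\partial_\rho\bigl(\rho^{a} e(\rho)\,\partial_\rho V\bigr) + \rho^{a} e_1(\rho)\,\partial_{tt}V \;=\; e(\rho)\,E(\rho)\,V .
\]
Substituting into the bulk integral obtained after the integration by parts causes the two $\partial_t V\,\partial_{tt}V$ terms to cancel, and after combining what remains is
\[
\frac{d}{dt}H_\gamma \;=\; \int_0^{2}\bigl(e(\rho)E(\rho) - \rho^{a} e_2(\rho)\bigr)\,V\,\partial_t V\,d\rho .
\]

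Thus conservation reduces to the pointwise identity $e(\rho)E(\rho)\equiv \rho^{a}e_2(\rho)$, and this is the step I expect to be the main obstacle: it is where the explicit formula for the potential $E(\rho)$ promised in \eqref{Erho} truly enters. The factor $\tfrac{n-1+a}{4}$ together with the polynomial $(1-\tfrac{\rho^2}{4})^{n-2}(n-2+n\tfrac{\rho^2}{4})$ appearing in $e_2$ are precisely the ingredients produced by the scattering construction on $(X^{n+1},\bar g)$, so the verification is algebraic once $E(\rho)$ has been computed. The only analytic point is the justification of integration by parts near the degenerate weight $\rho^{a}$ at $\rho=0$, which is standard for Caffarelli--Silvestre-type extensions and is guaranteed by the regularity of $V$ and of $\lim_{\rho\to 0}\rho^{a}\partial_\rho V$.
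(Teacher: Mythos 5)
Your proof is correct and takes essentially the same route as the paper, just run in the opposite direction: the paper multiplies the bulk equation of \eqref{problemaradial} by $e(\rho)\,\partial_t V$, integrates in $\rho$, performs one integration by parts, and recognizes total $t$-derivatives, whereas you differentiate $H_\gamma$ in $t$ and substitute the PDE after the same integration by parts and boundary cancellation. The identity $e(\rho)E(\rho)\equiv\rho^{a}e_2(\rho)$ that you flag as a potential obstacle is in fact immediate from \eqref{e} and \eqref{Erho} — $e_2$ is defined precisely so that it holds — so nothing nontrivial is left to verify there.
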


 Hamiltonian quantities for fractional problems have been recently developed in the setting of one-dimensional solutions for fractional semilinear equations $(-\Delta)^{\gamma}w+f(w)=0$. The first reference we find a conserved Hamiltonian quantity for this type of non-local equations is the paper by Cabr{\'e} and Sol{\'a}-Morales \cite{CabreSola-Morales} for the particular case $\gamma=1/2$. The general case $\gamma\in(0,1)$ was carried out by Cabr{\'e} and Sire in \cite{paperxavi}. On the one hand, in these two papers \cite{paperxavi,CabreSola-Morales}, the authors impose a nonlinearity coming from a double-well potential
 and look for layers (i.e., solutions that are monotone and have prescribed limits at infinity), and they are able to write a Hamiltonian quantity that is preserved. In addition, if one considers the same problem but on hyperbolic space, one finds that the geometry at infinity plays a role and the analogous Hamiltonian is only monotone (see \cite{GonzalezSaezSire}).

  On the other hand, if, instead, one looks for radial solutions for semilinear equations, then
 Cabr\'e and Sire  in \cite{paperxavi} and Frank, Lenzman and Silvestre in \cite{FrankLenzmanSilvestre} have developed a monotonicity formula for the associated Hamiltonian. In the setting of radial solutions with an isolated singularity for the fractional Yamabe problem, our Theorem \ref{thctthamiltonian} states that, if one uses the metric \eqref{metric_introduction} to rewrite the problem, then the associated Hamiltonian \eqref{Hamiltonian} is constant along trajectories.\\

 If one insists on performing an ODE-type study for the PDE problem \eqref{fracyamextv}, a possibility is to plot a phase portrait of the boundary values (at $\rho=0$), while keeping in mind that the equation is defined on the whole extension. From this point of view, one can prove the existence of two critical points: the constant solutions $v_0\equiv 0$ and $v_1\equiv 1$. Moreover, there exists an explicit homoclinic solution $v_\infty$, whose precise expression will be given in \eqref{C}; it corresponds to the $n$-sphere \eqref{sphere}.

The next step is to linearize the equation. As we will observe in Section \ref{section:linear}, the classical Hardy inequality, rewritten in terms of the background metric $g_0$, decides the stability of the explicit solutions $v_0$, $v_1$ and $v_\infty$. Stability issues for semilinear fractional Laplacian equations have received a lot of attention recently.  Some references are: \cite{Cabre-Cinti} for the half-Laplacian, \cite{RosOton-Serra:extremal-solution} for extremal solutions with exponential nonlinearity, \cite{Fall:fractional-Hardy-potential} for semilinear equations with Hardy potential. In the particular case of  the fractional Lane-Emden equation, stability was considered in \cite{Davila-Dupaigne-Wei,Fazly-Wei:stable-solutions}, for $\gamma\in(0,1)$ and \cite{Fazly-Wey:higher} for $\gamma\in(1,2)$. We believe that our methods, although still at their initial stage, would provide tools for a unified approach for all $\gamma\in\left(0,\frac{n}{2}\right)$.\\

Finally, we consider the linearization of equation \eqref{equation2} around the equilibrium $v_1\equiv 1$:
\begin{equation*}
P_\gamma^{g_0} v=\tfrac{n+2\gamma}{n-2\gamma}v\quad \text{on}\quad \mathbb R\times \mathbb S^{n-1},
\end{equation*}
and look at the projection over each eigenspace $\langle E_k\rangle$, $k=0,1,\ldots$,
\begin{equation}\label{linearization-k}P_\gamma^k v_k=\tfrac{n+2\gamma}{n-2\gamma}v_k.\end{equation} Although we will not provide a complete calculation of the spectrum, we can say the following:

 \begin{theorem}\label{theorem:linearization}
For the projection $k=0$, equation \eqref{linearization-k} has periodic solutions $v(t)$ with period $L_\gamma=\frac{2\pi}{\sqrt{\lambda_\gamma}}$, where $\lambda_\gamma$  is the unique positive solution of \eqref{eqlambdak}. In addition,
$$\lim_{\gamma\to 1}L_\gamma =L_1,$$
so we recover the classical case \eqref{L1}.
 \end{theorem}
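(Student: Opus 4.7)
The plan is to apply Theorem \ref{th1} with $k=0$ to reduce the linearized problem to a scalar equation in a single Fourier frequency $\xi$, and then analyze that equation using standard identities for the Gamma function.

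First I would carry out the linearization of \eqref{equation2} around $v_1\equiv 1$ carefully: differentiating $P_\gamma^{g_0}v=c_{n,\gamma}v^{(n+2\gamma)/(n-2\gamma)}$ produces the eigenvalue equation $P_\gamma^0 w=\tfrac{n+2\gamma}{n-2\gamma}\,c_{n,\gamma}\,w$ on $\mathbb{R}$, the extra factor $c_{n,\gamma}$ coming from the derivative of the nonlinearity. Because $\Theta^0_\gamma(\xi)$ is the Fourier symbol of $P^0_\gamma$ by Theorem \ref{th1}, the ansatz $w(t)=\cos(\xi t)$ with $\xi>0$ solves this equation precisely when
\begin{equation*}
\Theta_\gamma^0(\xi)\;=\;\tfrac{n+2\gamma}{n-2\gamma}\,c_{n,\gamma}.
\end{equation*}
Setting $\lambda_\gamma:=\xi^2$ turns this into the equation defining $\lambda_\gamma$, and the period of the resulting solution is $L_\gamma=2\pi/\xi=2\pi/\sqrt{\lambda_\gamma}$.

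The next step is existence and uniqueness of a positive root. At $\xi=0$ the symbol equals $\Theta_\gamma^0(0)=c_{n,\gamma}$ (consistent with $v\equiv 1$ solving the nonlinear problem), which is strictly smaller than $\tfrac{n+2\gamma}{n-2\gamma}\,c_{n,\gamma}$, while Stirling's formula yields $\Theta_\gamma^0(\xi)\sim \xi^{2\gamma}\to\infty$ as $\xi\to\infty$. To conclude uniqueness via the intermediate value theorem I would prove strict monotonicity of $\xi\mapsto\Theta_\gamma^0(\xi)$. Writing $|\Gamma(c+i\xi/2)|^2=\Gamma(c+i\xi/2)\,\Gamma(c-i\xi/2)$ and using the Weierstrass series for the digamma function, a short computation yields
\begin{equation*}
\frac{d}{d\xi}\log\Theta_\gamma^0(\xi)\;=\;\sum_{k=0}^{\infty}\left[\frac{\xi/2}{(k+\tfrac{n}{4}-\tfrac{\gamma}{2})^2+\tfrac{\xi^2}{4}}\;-\;\frac{\xi/2}{(k+\tfrac{n}{4}+\tfrac{\gamma}{2})^2+\tfrac{\xi^2}{4}}\right],
\end{equation*}
which is strictly positive for $\xi>0$ since the assumption $\gamma<n/2$ forces $\tfrac{n}{4}-\tfrac{\gamma}{2}>0$ and makes every bracket positive.

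Finally, for the limit I would use that at $\gamma=1$ the recursion $\Gamma(z+1)=z\Gamma(z)$ collapses the Gamma quotient in \eqref{symbol}: taking $z=\tfrac{n}{4}-\tfrac{1}{2}+i\tfrac{\xi}{2}$ one obtains $\Theta_1^0(\xi)=\xi^2+(n-2)^2/4$. Since $c_{n,1}=(n-2)^2/4$, the defining relation reduces to $\xi^2=n-2$, giving $\lambda_1=n-2$ and $L_1=2\pi/\sqrt{n-2}$ in agreement with \eqref{L1}. Joint continuity of $\Theta_\gamma^0(\xi)$ in $(\gamma,\xi)$, combined with the strict monotonicity just established, makes the implicit map $\gamma\mapsto\lambda_\gamma$ continuous, so $\lambda_\gamma\to n-2$ and $L_\gamma\to L_1$ as $\gamma\to 1$. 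I expect the main obstacle to be the monotonicity step: it is the one place where the restriction $\gamma<n/2$ is actively used, and without it uniqueness of the fundamental period (and hence continuous dependence on $\gamma$) would fail.
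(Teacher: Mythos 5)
Your proposal is correct and follows essentially the same route as the paper. Both reduce the linearized problem, via the explicit Fourier symbol $\Theta^0_\gamma(\xi)$ from Theorem~\ref{th1}, to the scalar equation $\Theta^0_\gamma(\xi)=\tfrac{n+2\gamma}{n-2\gamma}c_{n,\gamma}$; both establish strict monotonicity of the symbol through the Weierstrass expansion of the digamma function and use the endpoint values $\Theta^0_\gamma(0)=c_{n,\gamma}$ and $\Theta^0_\gamma(\xi)\to\infty$ to extract a unique positive root; and both recover $\lambda^0(1)=n-2$ from $\Gamma(z+1)=z\,\Gamma(z)$. The only cosmetic difference is that you quote the symbol directly while the paper re-runs the separation of variables $U_0=T(t)Z(z)$ in the scattering equation (with $\xi^2$ replaced by $\lambda^0$) before recycling the Section~\ref{simbolo} formulas; the underlying computation is identical. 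You also correctly reinstate the factor $c_{n,\gamma}$ in the linearized equation (present in \eqref{conforlinearizada} and \eqref{eqlambdak}, though dropped in the display \eqref{linearization-k} of the introduction), which is necessary for the $\gamma\to1$ limit to come out right.
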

We conjecture that $L_{\gamma}$ is the minimal period of the radial periodic solutions for the nonlinear problem \eqref{equation2}.\\

\begin{remark}
We also give some motivation to show that the projection on the $k$-eigenspace of \eqref{linearization-k} does not have periodic solutions if $k=1,2,\ldots$.
\end{remark}

Theorem \ref{theorem:linearization} gives the existence of periodic radial solutions for the linear problem. In addition, the existence of a conserved Hamiltonian hints that the original non-linear problem has periodic solutions too. Moreover, the period $L_\gamma$ behaves well under the limit $\gamma\to 1$. Using the Hamiltonian and the methods in \cite{paperxavi} one can prove that periodic solutions also behave well under this limit. We leave this proof for the forthcoming paper \cite{paper2}.

The construction of Delaunay solutions allows for many further studies. For instance, as a consequence of their construction one obtains the non-uniqueness of the solutions for the fractional Yamabe problem in the positive curvature case, since it gives different conformal metrics on $\mathbb S^1(L)\times \mathbb S^{n-1}$ that have constant fractional curvature. This is well known in the scalar curvature case (see the lecture notes \cite{Schoen:notas} for an excellent review, or the paper \cite{Schoen:number}). In addition, this gives some motivation to define a total fractional scalar curvature functional, which maximizes the standard fractional Yamabe quotient (\cite{MarQing})  across conformal classes. We hope to return to this problem elsewhere.

From another point of view, Delaunay solutions can be used in gluing problems. Classical references are, for instance, \cite{Mazzeo-Pacard:isolated, Mazzeo-Pollack-Uhlenbeck} for the scalar curvature, and \cite{Mazzeo-Pacard:Delaunay-ends,Mazzeo-Pacard-Pollack} for the construction of constant mean curvature surfaces with Delaunay ends.\\

There is an alternative notion of fractional curvature and fractional perimeter defined from the singular integral definition of the fractional Laplacian which gives a different quantity than our $Q^g_\gamma$. \cite{Caffarelli-Roquejoffre-Savin} introduces the notion of nonlocal mean curvature for the boundary of a set in $\r^n$ (see also the review \cite{Valdinoci:review}), and it has also received a lot of attention recently. Finding Delaunay-type surfaces with constant nonlocal mean curvature has just been accomplished in \cite{Cabre-Fall-Sola-Weth}. For a related nonlocal equation, but different than nonlocal mean curvature, the recent paper  \cite{Davila-delPino-Dipierro-Valdinoci} establishes variationally the existence of Delaunay-type hypersurfaces.\\

We finally comment that the negative fractional curvature case has not been explored yet, except for the works \cite{Chen-Veron:1,Chen-Veron:2}. They consider singular solutions for the problem $(-\Delta)^\gamma w+|w|^{p-1}w=0$ in a domain $\Omega$ with zero Dirichlet condition on $\partial\Omega$. This setting is very different from the positive curvature case because the maximum principle is valid here. We also cite the work \cite{Quittner-Reichel}, where they consider singular solutions of $\Delta W=0$ in a domain $\Omega$ with a nonlinear Neumann boundary condition $\partial_\nu W=f(x,W)-W$ on $\partial\Omega$.\\

The paper will be structured as follows: in Section \ref{section:preliminaries} we
will recall some standard background on the fractional Yamabe problem. In particular we present the equivalent formulation as an extension problem coming from scattering theory. In section \ref{relaclas} we will give a geometric interpretation of the problem. Next, in section \ref{simbolo} we will analyze the scattering equation to give a proof for theorem \eqref{th1}. That is, we will compute the Fourier symbol for the conformal fractional Laplacian. In section \ref{section:ODE} we face the problem from an ODE-type point of view. This kind of study over the extension problem \eqref{fracyamextv} gives us two equilibria and the existence of a Hamiltonian quantity conserved along the trajectories. Moreover we will find in Section \ref{section:explicit} an explicit homoclinic solution, which corresponds to the $n-$sphere. Finally, in Section \ref{section:linear} we will perform a linear analysis close to the constant solutions which corresponds to the $n$-cylinder.

\section{Preliminaries}\label{section:preliminaries}

The conformal Laplacian operator for a Riemannian metric $g$ on a $n$-dimensional manifold $M$ is defined as
\begin{equation}\label{Conformallaplacian}
L_g=-\Delta_g+c_{n} R_g,\quad\text{where } c_{n}=\tfrac{(n-2)}{4(n-1)},
\end{equation}
and $R_g$ is the scalar curvature. The conformal Laplacian is a conformally covariant operator, indeed, given $g_w$ and $g$ two conformally related metrics with $g_w=w^{\frac{4}{n-2}}g$, then the operator $L_g$ satisfies %
\begin{equation*}
L_{g_w}(f)=w^{-\frac{n+2}{n-2}}L_g(wf),
\end{equation*}
for every $f\in \mathcal C^{\infty}(M)$.
In the case $f=1$ we obtain the classical scalar curvature equation
$$-\Delta_g w+c_nR_g w=c_nR_{g_w}w^{\tfrac{n+2}{n-2}},$$
which, in the flat case, is precisely equation \eqref{equation3}.\\

The fractional Laplacian on $\mathbb R^n$ is defined through Fourier transform as
$$\widehat{(-\Delta)^{\gamma}w}=|\xi|^{2 \gamma}\widehat{w},\quad  \forall\gamma\in \r.$$
Note that we use the Fourier transform defined by
\begin{equation*}
\widehat{w}(\xi)=(2\pi)^{-n/2}\int_{\r^n}w(x)e^{-i\xi\cdot x}\,dx.
\end{equation*}
Let
$\gamma\in (0,1)$ and $u\in L^{\infty}\cap \mathcal C^2$ in $\r^n$, the fractional Laplacian in $\r^n$ can also be defined by

\begin{equation*}\label{deffraclapla}
 (-\Delta)^{\gamma}w(x)=\kappa_{n,\gamma}\text{P.V. }\int_{\r^n}\frac{w(x+y)-w(x)}{|y|^{(n+2 \gamma)}}\,dy,
\end{equation*}
where $P.V. $ denotes the principal value, and the constant $\kappa_{n,\gamma}$ (see \cite{Landkof}) is given by
$$\kappa_{n,\gamma}=\pi^{-\tfrac{n}{2}}2^{2\gamma}
\tfrac{\Gamma\left(\tfrac{n}{2}+\gamma\right)}{\Gamma(1-\gamma)}\gamma.$$

Caffarelli-Silvestre introduced in \cite{CaffarelliSilvestre} a different way to compute the fractional Laplacian in $\r^n$ for $\gamma\in(0,1)$. Take coordinates $x\in\mathbb R^n$, $y\in\mathbb R_ +$. Let $w$ be any smooth function defined on $\r^n$ and consider the extension $W:\r^n\times \r^+\rightarrow\r$ solution of the following partial differential equation:
\begin{equation}\label{Caffarelli-Silvestre-extension}\left\{ \begin{split}
\divergence(y^a \nabla W)&=0,\ \ x\in\r^n,\ y\in\r,\\
W(x,0)&=w(x), \ \ x\in\r^n,
\end{split}\right.\end{equation}
where $a=1-2\gamma$.
Note that we can write $W=K_\gamma *_x w$, where $K_\gamma$ is the Poisson kernel
\begin{equation}\label{Poisson-kernel}
K_\gamma(x,y)=c\frac{y^{1-a}}{(|x|^2+y^2)^{\frac{n+1-a}{2}}},
\end{equation}
and $c=c(n,\gamma)$ is a multiplicative constant which is chosen so that, for all $y>0,$ $\int K_\gamma(x,y)\, dx=1$.
In addition,
$$(-\Delta)^{\gamma}w=-\tilde{d}_{\gamma}\lim_{y\rightarrow 0^+}y^a\partial_y W,$$
for the constant
\begin{equation}\label{dtg}
 \tilde{d}_{\gamma}=-\frac{2^{2\gamma-1}\Gamma(\gamma)}{\gamma\Gamma(-\gamma)}.
\end{equation}

One can generalize this construction to the curved setting. Let $X^{n+1}$ be a smooth manifold of dimension $n+1$ with smooth boundary $\partial X=M^n.$ A defining function in $\overline{X}$ for the boundary $M$ is a function $\rho$  which satisfies:
\begin{equation}\label{propdf}
\rho>0 \text{ in } X,\quad
\rho=0 \text{ on } M\quad \text{and} \quad
d\rho\neq 0 \text{ on } M.
\end{equation}
A Riemannian metric $g^+$ on $X$ is conformally compact if $(\overline{X},\bar{g})$ is a compact Riemannian manifold with boundary $M$ for a defining function $\rho$ and $\bar{g}=\rho^2g^+$.
Any conformally compact manifold $(X,g^+)$ carries a well-defined conformal structure $[g]$ on the boundary $M$, where $g$ is the restriction of $\bar{g}|_M$. We call $(M,[g])$ the conformal infinity of the manifold $X$.
We usually write these conformal changes on $M$ as $g_w=w^{\frac{4}{n-2\gamma}}g$ for a positive smooth function $w$.

A conformally compact manifold $(X,g^+)$ is called conformally compact Einstein manifold if, in addition, the metric satisfies the Einstein equation $Ric_{g^+}=-ng^+$, where $Ric$ represents the Ricci tensor.
One knows \cite{Graham} that
given a conformally compact Einstein manifold $(X,g^+)$ and a representative $g$ in $[g]$ on the conformal infinity $M^n$, there is an unique defining function $\rho$ such that one can find coordinates on a tubular neighborhood $M\times(0,\varepsilon)$ in $X$ in which $g^+$ has the normal form
\begin{equation}\label{confcomeinm}
g^+=\rho^{-2}(d\rho^2+g_\rho),
\end{equation}
where $g_\rho$ is a family on $M$ of metrics depending on the defining function and satisfying $g_{\rho}|_M=g$.%

Let $(X,g^+)$ be a conformally compact Einstein manifold with conformal infinity $(M,[g])$.
 It is well known from scattering theory \cite{GrahamZorski,MazzeoMelrose,Guillarmou} that, given $w\in \mathcal C^{\infty}(M)$ and $s\in \c $, if $ s(n-s) $ does not belong to the set of $L^2$-eigenvalues of $-\Delta_{g^+}$ then the eigenvalue problem
\begin{equation}\label{eigp}
-\Delta_{g^+}U-s(n-s)U=0 \text{ in } X,
\end{equation}
has a unique solution of the form
\begin{equation}\label{formau}
U=W \rho^{n-s}+ W_1 \rho^s, \quad W,W_1 \in \mathcal C^{\infty}(\overline{X}),\ \ W|_{\rho=0}=w.
\end{equation}
Taking a representative $g$ of the conformal infinity $(M,[g])$ we can define a family of meromorphic pseudo-differential operators $S(s)$, called scattering operators, as
\begin{equation}\label{scattering}
S(s)w=W_1|_M.
\end{equation}
The case that the order of the operator is an even integer was studied in \cite{GrahamZorski}. More precisely, suppose that $m\in\n$ and $m\leq\frac{n}{2}$ if $n$ is even, and that $(\frac{n}{2})^2-m^2$ is not an $L^2-$eigenvalue of $-\Delta_{g^+}$, then $S(s)$
has a simple pole at $s=\frac{n}{2}+m$. Moreover, if $P^{g}_m$ denotes the conformally invariant GJMS-operator on $M$ constructed in \cite{GrahamJenneMasonSparling} then
$$c_mP^{g}_m=-\residue_{s=\frac{n}{2+m}}S(s),\ \ \ c_m=(-1)^m[2^{2m}m!(m-1)!]^{-1},$$
where $\residue_{s=s_0}S(s)$ denotes the residue at $s_0$ of the meromorphic family of operators $S(s)$.
In particular,
   if $m=1$ we have the conformal Laplacian  $P_1^{g}=L_g$ from \eqref{Conformallaplacian}, and if $m=2$, the Paneitz operator $$P_2^{g}=(-\Delta_{g})^2+\delta(a_nR_{g}+b_nRic_{g})d+\frac{n-4}{2}Q_{g},$$
where $Q_g$ is the $Q$-curvature and $a_n,\ b_n$ are dimensional constants (\cite{Paneitz}).

It is also possible to define conformally covariant fractional powers of the Laplacian in the
case $\gamma\not\in\n$. For the rest of the paper, we set $\gamma\in\left(0,\frac{n}{2}\right)$ not an integer, $s=\frac{n}{2}+\gamma$. In addition, assume that $s(n-s)$ is not an $L^2$-eigenvalue for $-\Delta_{g^+}$ and that the first eigenvalue $\lambda_1(-\Delta_{g^+})>s(n-s)$.  In this setting:

\begin{definition}
 We define the conformally covariant fractional powers of the Laplacian as
\begin{equation}\label{pdg}
 P^g_{\gamma}[g^+,g]=d_{\gamma}S\left(\tfrac{n}{2}+\gamma\right),\text{ where }\ d_{\gamma}=2^{2\gamma}\frac{\Gamma(\gamma)}{\Gamma(-\gamma)}.
\end{equation}
\end{definition}
As a pseudodifferential operator, its principal symbol coincides with the one of $(-\Delta_{g})^{\gamma}$.
In the rest of the paper, once $g^+$ is fixed, we will use the simplified notation:
$$P_\gamma^{g}:=P_\gamma[g^+,g].$$
These operators satisfy the conformal property
\begin{equation}\label{conformproperty}
 P_{\gamma}^{g_{w}}f=w^{-\frac{n+2\gamma}{n-2\gamma}}P_{\gamma}^{g}(wf), \quad \forall f\in \mathcal C^{\infty}(M),
\end{equation}
for a change of metric $$g_{w}:=w^{\frac{4}{n-2\gamma}}g,\ w>0.$$

\begin{definition}
We define the fractional order curvature as:
$$Q_\gamma^{g}:=P_\gamma^{g}(1).$$
\end{definition}
Note that up to multiplicative constant, $Q_1$ is the classical scalar curvature and $Q_2$ is the so called $Q$-curvature.
\begin{remark}
Using the previous definition we can express the conformal property \eqref{conformproperty} as
\begin{equation}\label{confQ}
 P_{\gamma}^{g}(w)=w^{\frac{n+2\gamma}{n-2\gamma}}Q_{\gamma}^{g_{w}}.
\end{equation}
\end{remark}%

Explicit formulas for $P_{\gamma}^{g}$ are not known in general, however, Branson \cite{Brason}, gave an explicit formula for the conformal Laplacian on the standard sphere, i.e,
\begin{equation}\label{P_g^g1C}
P_{\gamma}^{g_{\s^n}}=
\frac{\Gamma(B+\gamma+\frac{1}{2})}{\Gamma(B-\gamma+\frac{1}{2})},
\end{equation}
where $B=\sqrt{-\Delta_{g_{\s^n}}+(\frac{n-1}{2})^2}$.
For example,%
$$P_{1}^{g_{\s^n}}=-\Delta_{g_{\s^n}}+\tfrac{n(n-2)}{4},\quad P_{1/2}^{g_{\s^n}}=\sqrt{-\Delta_{g_{\s^n}}+\left(\tfrac{n-1}{2}\right)^2}.$$

From \eqref{P_g^g1C} we can compute the fractional curvature on the unit sphere as
\begin{equation}\label{fqsphere}
Q_{\gamma}^{g_{\s^n}}=P^{g_{\s^n}}_{\gamma}(1)=\frac{\Gamma(\frac{n}{2}+\gamma)}{\Gamma(\frac{n}{2}-\gamma)}.
\end{equation}

It is proven in \cite{MarChang} (see also the more recent paper \cite{CaseChang}) that the conformal fractional Laplacian is the Dirichlet-to-Neumann operator for an extension problem that generalizes \eqref{Caffarelli-Silvestre-extension}:

\begin{theorem}\label{equivaexten}
 Let $\gamma\in(0,1)$ and $(X,g^+)$ be a conformally compact Einstein manifold with conformal infinity $(M,[g])$. For any defining function $\rho$ of $M$ satisfying \eqref{confcomeinm} in $X$, the scattering problem \eqref{eigp}-\eqref{formau}
is equivalent to
\begin{equation}\label{divE}\left\{
 \begin{split}
  -\divergence(\rho^a\nabla W)+E(\rho)W&=0\text{ in }(X,\bar{g}),\\
W&=w\text{ on } M,
 \end{split}\right.
\end{equation}
where
$$\bar{g}=\rho^2g^+,\ \ W=\rho^{s-n}U,\ \ s=\tfrac{n}{2}+\gamma,\ \ a=1-2\gamma.$$
and the derivatives in \eqref{divE} are taken respect to the metric $\bar{g}$. The lower order term is given by
\begin{equation*}\label{E1}
 E(\rho)=-\Delta_{\bar{g}}(\rho^{\frac{a}{2}})\rho^{\frac{a}{2}}
 +\left(\gamma^2-\tfrac{1}{4}\right)\rho^{-2+a}+\tfrac{n-1}{4n}R_{\bar{g}}\rho^a,
\end{equation*}
or written back in the metric $g^+$,
\begin{equation}\label{E}
E(\rho)=\rho^{-1-s}(-\Delta_{g^+}-s(n-s))\rho^{n-s}.
\end{equation}
In addition, we have the following formula for the calculation of the conformal fractional Laplacian
\begin{equation*}
P_{\gamma}^{g}w=-\tilde{d}_{\gamma}\lim_{\rho\rightarrow 0}\rho^{a}\partial_{\rho} W,
\end{equation*}
where $\tilde{d}_{\gamma}$ is defined as
\begin{equation*}
 \tilde{d}_{\gamma}=-\frac{2^{2\gamma-1}\Gamma(\gamma)}{\gamma\Gamma(-\gamma)}.
\end{equation*}
\end{theorem}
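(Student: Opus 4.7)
The strategy is to apply Theorem \ref{th1} and reduce the linearized equation \eqref{linearization-k} with $k=0$ to an algebraic equation in the Fourier variable. Taking the one-dimensional Fourier transform \eqref{fourier} in $t$ converts $P_\gamma^0 v_0 = \tfrac{n+2\gamma}{n-2\gamma}v_0$ into
\begin{equation*}
\Bigl(\Theta^0_\gamma(\xi) - \tfrac{n+2\gamma}{n-2\gamma}\Bigr)\widehat{v_0}(\xi) = 0.
\end{equation*}
A bounded periodic solution of period $L$ has its Fourier transform (as a tempered distribution) supported on $\{\pm\omega\}$ with $\omega = 2\pi/L$, so the existence of such a nontrivial solution is equivalent to $\Theta^0_\gamma(\omega) = \tfrac{n+2\gamma}{n-2\gamma}$; setting $\omega = \sqrt{\lambda_\gamma}$ gives precisely equation \eqref{eqlambdak}, and the resulting nontrivial periodic solution is $v(t) = \alpha\cos(\sqrt{\lambda_\gamma}\,t) + \beta\sin(\sqrt{\lambda_\gamma}\,t)$.

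Existence and uniqueness of a positive root reduce to a study of the real-analytic function $\xi\mapsto \Theta^0_\gamma(\xi)$ on $[0,\infty)$. It is even in $\xi$ (from $|\Gamma(z)|^{2}=\Gamma(z)\Gamma(\bar z)$), blows up like $|\xi|^{2\gamma}$ as $|\xi|\to\infty$ by Stirling's asymptotics for the gamma function, and takes value $\Theta^0_\gamma(0)=P_\gamma^{g_0}(1)$ at the origin, which lies strictly below the right-hand side once $c_{n,\gamma}$ is normalized as in Proposition \ref{cte}. The core of the argument is strict monotonicity on $(0,\infty)$. Writing $A=\tfrac{n}{4}+\tfrac{\gamma}{2}$, $B=\tfrac{n}{4}-\tfrac{\gamma}{2}$ and differentiating term by term, the identity $\psi'(z)=\sum_{k\ge 0}(z+k)^{-2}$ for the digamma function yields
\begin{equation*}
\frac{d}{d\xi}\log\Theta^0_\gamma(\xi) = \im\bigl[\psi(B+\tfrac{i\xi}{2}) - \psi(A+\tfrac{i\xi}{2})\bigr] = \xi\int_{B}^{A}\sum_{k\ge 0}\frac{s+k}{\bigl((s+k)^{2}+\xi^{2}/4\bigr)^{2}}\,ds,
\end{equation*}
which is manifestly positive for $\xi>0$. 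Combined with the intermediate value theorem this produces a unique $\lambda_\gamma > 0$.

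For the limit $\gamma\to 1$, the functional equation $\Gamma(z+1)=z\Gamma(z)$ applied with $z=\tfrac{n-2}{4}+\tfrac{i\xi}{2}$ collapses the symbol to
\begin{equation*}
\Theta^0_1(\xi) = 4\Bigl|\tfrac{n-2}{4}+\tfrac{i\xi}{2}\Bigr|^{2} = \tfrac{(n-2)^{2}}{4}+\xi^{2},
\end{equation*}
which is exactly the Fourier symbol of the classical Jacobi operator $-\partial_t^{2}+\tfrac{(n-2)^{2}}{4}$ obtained by linearizing \eqref{ODE1} at $v\equiv 1$. Substituting into \eqref{eqlambdak} then returns $\lambda_1 = n-2$ and $L_1 = \tfrac{2\pi}{\sqrt{n-2}}$, matching \eqref{L1}. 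The continuous dependence $\lambda_\gamma\to\lambda_1$ as $\gamma\to 1$ follows from the implicit function theorem applied to the defining equation for $\lambda_\gamma$: the $\xi$-derivative at the root is nonzero by the monotonicity above. The main obstacle in this plan is justifying strict monotonicity of $\Theta^0_\gamma$ cleanly; once the digamma series is invoked the positivity becomes transparent, but without it one quickly loses control of the phase of the complex gamma functions.
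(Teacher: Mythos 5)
Your proposal does not address the statement you were asked to prove. The statement is Theorem \ref{equivaexten}: the equivalence between the scattering problem \eqref{eigp}--\eqref{formau} on a conformally compact Einstein manifold and the degenerate-elliptic problem \eqref{divE}, together with the explicit expression \eqref{E} for the zeroth-order term and the identity $P^g_\gamma w=-\tilde d_\gamma\lim_{\rho\to0}\rho^a\partial_\rho W$. Any proof of this has to start from the substitution $W=\rho^{s-n}U$ and the conformal change $\bar g=\rho^2 g^+$, use the transformation law of the Laplacian under this change to convert $-\Delta_{g^+}U-s(n-s)U=0$ into a divergence-form equation with weight $\rho^a=\rho^{1-2\gamma}$, identify the error term $E(\rho)=\rho^{-1-s}(-\Delta_{g^+}-s(n-s))\rho^{n-s}$ (equivalently its expression in $\bar g$), and finally read off the weighted Neumann limit from the expansion $U=W\rho^{n-s}+W_1\rho^s$, matching the constants $d_\gamma$ and $\tilde d_\gamma$ so that the limit reproduces $P^g_\gamma w=d_\gamma S(s)w$. (In the paper this result is quoted from \cite{MarChang,CaseChang} rather than re-proved.) None of these ingredients --- the change of unknown, the conformal computation, the derivation of $E(\rho)$, the identification of the Neumann data with the scattering operator --- appears anywhere in your argument.

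What you wrote is instead a proof sketch of Theorem \ref{theorem:linearization}: you invoke the symbol $\Theta^0_\gamma$ of Theorem \ref{th1}, reduce the linearized equation \eqref{linearization-k} for $k=0$ to an algebraic condition on the Fourier variable, obtain existence and uniqueness of a positive root $\lambda_\gamma$ of \eqref{eqlambdak} from strict monotonicity of the symbol (via the digamma series) together with its values at $0$ and $+\infty$, and recover $\lambda_1=n-2$, i.e.\ \eqref{L1}, as $\gamma\to1$. That argument is essentially parallel to Section \ref{section:linear} of the paper, which proves monotonicity of the analogous quotient $F(\beta)$ with the same digamma expansion and Beta-function asymptotics; also note that the correct right-hand side of the root equation carries the factor $c_{n,\gamma}$ from Proposition \ref{cte}, as in \eqref{eqlambdak}, not just $\tfrac{n+2\gamma}{n-2\gamma}$. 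But however one judges it as a proof of that other theorem, it contains nothing bearing on the extension/scattering equivalence of Theorem \ref{equivaexten}, so as a proof of the stated result it is a complete miss.
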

\begin{remark}\label{remark:Euclidean}
If $X$ is the hyperbolic space $\mathbb H^{n+1}$, identified with the upper half space $\r^{n+1}_+$ with the metric $g^+=\frac{dy^2+|dx|^2}{y^2}$, then the conformal infinity is simply $M=\r^n$ with the standard Euclidean metric $|dx|^2$ and  therefore, problem \eqref{divE} is precisely the extension problem considered by Caffarelli-Silvestre \eqref{Caffarelli-Silvestre-extension}. As a consequence, the conformal fractional Laplacian reduces to the standard fractional Laplacian without curvature terms, i.e., $P^{|dx|^2}_{\gamma}=(-\Delta)^{\gamma}$.
\end{remark}

Now we are going to choose a suitable defining function $\rho^*$, in order to transform the problem \eqref{confcomeinm} into one of pure divergence form. We follow \cite{MarChang,CaseChang}:
\begin{theorem}\label{E0}
 Set $\gamma\in(0,1)$. Let $(X,g^+)$ be a conformally compact Einstein manifold with conformal infinity $(M,[g])$, and such that $\lambda_1(-\Delta_{g^+})>\frac{n^2}{4}-\gamma^2$. Assuming that $\rho$ is a defining function satisfying \eqref{propdf}, there exists another (positive) defining function $\rho^*$ on $X$, %
such that for the term $E(\rho)$ defined in \eqref{E} we have $$E(\rho^*)=0.$$
The asymptotic expansion of this new defining function is
 \begin{equation*}
 \rho^*=\rho\left(1+\frac{2 Q_{\gamma}^{g}}{(n-2\gamma)d_{\gamma}}\rho^{2\gamma}+O(\rho^2)\right).
 \end{equation*}
In addition, the metric $g^*=(\rho^*)^2g^+$ satisfies $g^*|_{\rho=0}=g$ and has asymptotic expansion
$$g^*=(d\rho^*)^2[1+O((\rho^*)^{2\gamma})]+g[1+O((\rho^*)^{2\gamma})].$$

The scattering problem \eqref{formau}-\eqref{eigp} is  equivalent to the following one.
\begin{equation*}\left\{
\begin{split}
 -\divergence((\rho^*)^a\nabla W)&=0\text{ in }(X,g^*),\\
W&=w\text{ on }M,
\end{split}\right.
\end{equation*}
where the derivatives are taken with respect to the metric $g^*$ and $W=(\rho^*)^{s-n}U$.
Moreover
\begin{equation}\label{Pgrhostar}
 P_{\gamma}^{g}w=-\tilde{d}_{\gamma}\lim_{\rho^*\rightarrow 0}(\rho^*)^a\partial_{\rho^*}W+wQ_{\gamma}^{g}.
\end{equation}
\end{theorem}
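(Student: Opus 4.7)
The plan is to construct $\rho^*$ directly from scattering theory, exploiting the fact that by the alternative expression \eqref{E},
$$E(\rho) = \rho^{-1-s}\bigl(-\Delta_{g^+}-s(n-s)\bigr)\rho^{n-s},$$
so demanding $E(\rho^*)=0$ is equivalent to asking that $(\rho^*)^{n-s}$ solves the eigenvalue problem \eqref{eigp}. Under the spectral hypothesis $\lambda_1(-\Delta_{g^+}) > n^2/4-\gamma^2 = s(n-s)$, I would invoke the standard existence/uniqueness for the scattering problem with boundary data $w\equiv 1$, producing $U_0 = W_0 \rho^{n-s} + W_{0,1}\rho^s$ as in \eqref{formau} with $W_0|_{\rho=0}=1$. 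Since $W_0 \to 1$ at $M$ and $U_0>0$ near the boundary (and globally, by the maximum principle below the bottom of the spectrum), the definition
$$\rho^* := U_0^{1/(n-s)}, \qquad (\rho^*)^{n-s} = U_0$$
yields a positive smooth function on $X$ with $\rho^*/\rho \to 1$ at $M$, hence a genuine defining function. By construction $E(\rho^*)=0$.

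For the asymptotic expansion, I would use the standard formal analysis of \eqref{eigp}: the indicial roots are $s$ and $n-s$, the expansion of $W_0$ contains only even powers of $\rho$ beginning at order $2$, and the definition of $P_\gamma^g$ in \eqref{pdg} identifies the scattering data as
$$W_{0,1}|_{\rho=0} = S(s)(1) = \frac{1}{d_\gamma}\, P_\gamma^g(1) = \frac{Q_\gamma^g}{d_\gamma}.$$
Therefore $U_0 = \rho^{n-s}\bigl[1 + (Q_\gamma^g/d_\gamma)\rho^{2\gamma} + O(\rho^2)\bigr]$, and taking the $(n-s)$-th root with $n-s = n/2-\gamma$ gives the stated expansion of $\rho^*$. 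Plugging this into $g^* = (\rho^*)^2 g^+$ and using the normal form \eqref{confcomeinm} $g^+ = \rho^{-2}(d\rho^2 + g_\rho)$ with $g_\rho = g+O(\rho^2)$ then yields the claimed expansion of $g^*$ up to $O((\rho^*)^{2\gamma})$ errors.

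Once $E(\rho^*)=0$, applying Theorem \ref{equivaexten} with defining function $\rho^*$ immediately gives the divergence-form equation $-\divergence_{g^*}((\rho^*)^a\nabla_{g^*} W)=0$, $W|_{M}=w$, with $W = (\rho^*)^{s-n}U$. The subtlest point is the boundary formula \eqref{Pgrhostar}: writing $W^{ext} := \rho^{s-n}U$ (the extension from Theorem \ref{equivaexten}) and $W^* := (\rho^*)^{s-n}U$, one has $W^* = (\rho/\rho^*)^{n-s}W^{ext}$, and expanding both factors using $W^{ext} = w + (P_\gamma^g w/d_\gamma)\rho^{2\gamma}+O(\rho^2)$ and $(\rho/\rho^*)^{n-s} = 1 - (Q_\gamma^g/d_\gamma)\rho^{2\gamma}+O(\rho^2)$ produces
$$W^* = w + \frac{P_\gamma^g w - w\, Q_\gamma^g}{d_\gamma}(\rho^*)^{2\gamma} + O((\rho^*)^2).$$
Computing $-\tilde d_\gamma \lim_{\rho^*\to 0}(\rho^*)^a \partial_{\rho^*} W^*$ and using the arithmetic identity $-2\gamma\,\tilde d_\gamma/d_\gamma = 1$, which follows directly from \eqref{dtg} and \eqref{pdg}, yields \eqref{Pgrhostar}. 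The hard part will be precisely this bookkeeping: tracking the asymptotic expansion of $W^{ext}$ in $\rho$ versus that of $W^*$ in $\rho^*$ under the nonlinear change of defining function, verifying that higher-order remainders drop out in the limit, and juggling the two normalization constants $d_\gamma, \tilde d_\gamma$ so that the correction term $w\,Q_\gamma^g$ emerges with the correct sign.
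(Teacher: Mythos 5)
Your proposal is correct and follows the standard construction from Chang--Gonz\'alez \cite{MarChang} (Lemma 4.5), which is exactly the reference the paper cites for this theorem; the paper itself states the result without proof. The central observation---that $E(\rho)=\rho^{-1-s}(-\Delta_{g^+}-s(n-s))\rho^{n-s}$ vanishes precisely when $(\rho^*)^{n-s}$ solves the scattering eigenvalue equation, so one should take $\rho^*:=U_0^{1/(n-s)}$ with $U_0$ the scattering solution with boundary datum $1$---is the right one, and the spectral hypothesis $\lambda_1(-\Delta_{g^+})>\frac{n^2}{4}-\gamma^2=s(n-s)$ is indeed what grants both existence/uniqueness of $U_0$ and its global positivity via the maximum principle. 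Your arithmetic also checks out: $n-s=(n-2\gamma)/2$ gives the factor $2/(n-2\gamma)$ in the first-order coefficient of $\rho^*$, and the identity $-2\gamma\tilde d_\gamma/d_\gamma=1$ follows directly from \eqref{dtg} and \eqref{pdg}, which is precisely what makes the $wQ_\gamma^g$ correction in \eqref{Pgrhostar} come out with the right constant.

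The only thing I would flag is that you announce the "hard part" (tracking remainders) without actually discharging it. It is worth noting explicitly that this bookkeeping is benign: after writing $W^*=w+\frac{P_\gamma^g w-wQ_\gamma^g}{d_\gamma}(\rho^*)^{2\gamma}+R$, every term in $R$ is $O((\rho^*)^{\min(4\gamma,2)})$, and since $a=1-2\gamma$ one has $(\rho^*)^a\partial_{\rho^*}(\rho^*)^\beta=\beta(\rho^*)^{\beta-2\gamma}\to 0$ for any $\beta>2\gamma$; hence all higher powers (including $(\rho^*)^{4\gamma}$ when $\gamma<\tfrac12$ and $(\rho^*)^2$ always) drop out in the limit, and only the $(\rho^*)^{2\gamma}$ coefficient survives. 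Also, your appeal to the parity of the $W_0$-expansion (even powers only, starting at $\rho^2$) is what guarantees that for $\gamma\in(0,1)$ the indicial powers $\rho^{2\gamma}$ and $\rho^2$ do not collide, so no logarithmic terms appear and the expansion is as clean as you claim.
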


The fractional Yamabe problem is, for $\gamma\in\left(0,\tfrac{n}{2}\right)$, to find a new metric $g_w=w^{\frac{4}{n-2\gamma}}g$  on $M$ conformal to $g$, with constant fractional curvature $Q_{\gamma}^{g_{w}}$.
Using the conformal property \eqref{confQ} the Yamabe problem is equivalent to find $w$ a strictly positive smooth function on $M$ satisfying
\begin{equation}\label{fracyamp}
P_{\gamma}^{g}(w)=cw^{\frac{n+2\gamma}{n-2\gamma}},\
\  w>0.
\end{equation}
In this paper we are interested in the positive curvature case, and the constant $c=c_{n,\gamma}$ will be normalized as in Proposition \ref{cte} below.

Thanks to Theorem \ref{equivaexten}, \eqref{fracyamp} is equivalent to the existence of a strictly positive $\mathcal C^\infty$ solution for extension problem:
\begin{equation}\label{fracyamext}\left\{
\begin{split}
-\divergence(\rho^a\nabla W)+E(\rho)W&=0\text{ in }(X,\bar{g}),\\
W&=w\text{ on }M,\\
-\tilde{d}_{\gamma}\lim_{\rho\rightarrow 0}\rho^ a \partial_{\rho}W &=c_{n,\gamma}w ^{\frac{n+2\gamma}{n-2\gamma}}\text{ on }M.
\end{split}\right.
\end{equation}
Using the special defining function from Theorem \ref{E0}, the fractional Yamabe problem \eqref{fracyamext} may be rewritten as
\begin{equation}\label{fracyamE0}\left\{
\begin{split}
-\divergence((\rho^*)^a\nabla W)&=0\text{ in }(X,g^*),\\
W&=w\text{ on }M,\\
-\tilde{d}_{\gamma}\lim_{\rho^*\rightarrow 0}(\rho^*)^ a \partial_{\rho^*}W +wQ^{g}_{\gamma}&=c_{n,\gamma}w ^{\frac{n+2\gamma}{n-2\gamma}}\text{ on }M.
\end{split}\right.
\end{equation}
Indeed we only need to rewrite the equation for the Yamabe problem \eqref{fracyamp} using the expression of $P_{\gamma}^{g}$ from \eqref{Pgrhostar}. Without danger of confusion, note that in general the solutions $W$ \eqref{fracyamext} and \eqref{fracyamE0} are different, but in the sequel they will be denoted by the same letter.

\begin{proposition}\label{cte}
The fractional curvature of the cylindrical metric $g_{w_1}={w_1}^{\frac{4}{n-2\gamma}}|dx|^2$ for
the conformal change
\begin{equation}\label{w1}w_1(x)=|x|^{-\frac{n-2\gamma}{2}},\end{equation} is the constant
 \begin{equation*}\label{cng}
c_{n,\gamma}=2^{2\gamma}\left(\frac{\Gamma(\frac{1}{2}(\frac{n}{2}+\gamma))}
{\Gamma(\frac{1}{2}(\frac{n}{2}-\gamma))}\right)^2>0.\end{equation*}
\end{proposition}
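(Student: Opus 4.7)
The strategy is to verify directly that $w_1(x)=|x|^{-(n-2\gamma)/2}$ satisfies $(-\Delta)^{\gamma}w_1 = c_{n,\gamma}\,w_1^{(n+2\gamma)/(n-2\gamma)}$ on $\r^n\setminus\{0\}$. Since by Remark \ref{remark:Euclidean} we have $P^{|dx|^2}_\gamma=(-\Delta)^\gamma$, the conformal property \eqref{confQ} then identifies the left-hand side with $Q_\gamma^{g_{w_1}}\,w_1^{(n+2\gamma)/(n-2\gamma)}$, so constancy of the fractional curvature with value $c_{n,\gamma}$ follows at once. Noting that $w_1^{(n+2\gamma)/(n-2\gamma)}=|x|^{-(n+2\gamma)/2}$, what I really need is the pointwise identity $(-\Delta)^\gamma|x|^{-(n-2\gamma)/2}=c_{n,\gamma}\,|x|^{-(n+2\gamma)/2}$.

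By rotational invariance, homogeneity and the scaling relation $(-\Delta)^\gamma(f(\lambda\,\cdot))(x)=\lambda^{2\gamma}\bigl((-\Delta)^\gamma f\bigr)(\lambda x)$, for any $0<\alpha<n$ one must have $(-\Delta)^\gamma(|x|^{-\alpha})=\lambda_\gamma(\alpha)\,|x|^{-\alpha-2\gamma}$ for some scalar $\lambda_\gamma(\alpha)$, at least as tempered distributions on $\r^n\setminus\{0\}$. I would pin down $\lambda_\gamma(\alpha)$ via the classical Riesz Fourier pair $\widehat{|x|^{-\alpha}}(\xi)=2^{n/2-\alpha}\,\Gamma(\tfrac{n-\alpha}{2})/\Gamma(\tfrac{\alpha}{2})\,|\xi|^{\alpha-n}$ (in the normalization of \eqref{fourier}); multiplying by $|\xi|^{2\gamma}$ and inverting the Fourier transform gives the Riesz composition formula
\begin{equation*}
\lambda_\gamma(\alpha)=2^{2\gamma}\,\frac{\Gamma\!\left(\tfrac{\alpha+2\gamma}{2}\right)\Gamma\!\left(\tfrac{n-\alpha}{2}\right)}{\Gamma\!\left(\tfrac{\alpha}{2}\right)\Gamma\!\left(\tfrac{n-\alpha-2\gamma}{2}\right)}.
\end{equation*}
A more self-contained alternative, avoiding distributional Fourier manipulations, would be either to evaluate the singular integral \eqref{deffraclapla} at the preferred point $x=e_1$ and reduce to a one-variable Beta integral, or, in the range $\gamma\in(0,1)$, to compute $W_1=K_\gamma\ast_x w_1$ from the explicit Poisson kernel \eqref{Poisson-kernel} (scale invariance forces $W_1(x,y)=y^{-(n-2\gamma)/2}F(|x|/y)$ for a single-variable $F$) and then apply the boundary operator $-\tilde{d}_\gamma\lim_{y\to 0^+}y^a\partial_y$ from Theorem \ref{equivaexten}.

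Substituting $\alpha=(n-2\gamma)/2$, an elementary check shows $\tfrac{\alpha+2\gamma}{2}=\tfrac{n-\alpha}{2}=\tfrac{1}{2}\bigl(\tfrac{n}{2}+\gamma\bigr)$ and $\tfrac{\alpha}{2}=\tfrac{n-\alpha-2\gamma}{2}=\tfrac{1}{2}\bigl(\tfrac{n}{2}-\gamma\bigr)$, so the four $\Gamma$-factors collapse into squares and
\begin{equation*}
\lambda_\gamma\!\left(\tfrac{n-2\gamma}{2}\right)=2^{2\gamma}\left(\frac{\Gamma\!\left(\tfrac{1}{2}\bigl(\tfrac{n}{2}+\gamma\bigr)\right)}{\Gamma\!\left(\tfrac{1}{2}\bigl(\tfrac{n}{2}-\gamma\bigr)\right)}\right)^{\!2}=c_{n,\gamma},
\end{equation*}
which closes the argument. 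The main (modest) obstacle is the careful distributional interpretation of the Fourier identity: I must check that both $\alpha=(n-2\gamma)/2$ and $\alpha+2\gamma=(n+2\gamma)/2$ lie in the admissible range $(0,n)$ for every $\gamma\in(0,n/2)$, so that the Riesz composition formula applies without analytic continuation. Apart from this bookkeeping, the proof is essentially a Gamma-function identity, and the normalization of $c_{n,\gamma}$ drops out automatically.
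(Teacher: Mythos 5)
Your proposal is correct and follows exactly the same strategy as the paper: reduce the statement, via the conformal transformation law \eqref{confQ} and the identification $P_\gamma^{|dx|^2}=(-\Delta)^\gamma$, to the pointwise identity $(-\Delta)^\gamma |x|^{-(n-2\gamma)/2}=c_{n,\gamma}|x|^{-(n+2\gamma)/2}$. The only difference is that the paper delegates the evaluation of the fractional Laplacian of a power function to the references \cite{GonzalezMazzeoSire,xaviros}, whereas you carry out the Riesz composition computation (and your Fourier constant, Gamma-identity substitution, and range check $0<(n\pm 2\gamma)/2<n$ for $\gamma\in(0,n/2)$ are all accurate). This is a harmless amplification, not a different route.
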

\begin{proof}
The value is calculated using the conformal property \eqref{confQ}, as follows:%
\begin{equation*}
Q_{\gamma}^{g_{w_1}}={w_1}^{-\frac{n+2\gamma}{n-2\gamma}}P_{\gamma}^{|dx|^2}(w_1)
={w_1}^{-\frac{n+2\gamma}
{n-2\gamma}}(-\Delta)^{\gamma}(w_1)=:c_{n,\gamma}.
\end{equation*}
The last equality follows from the calculation of the fractional Laplacian of a power function; it can be found in \cite{GonzalezMazzeoSire,xaviros}.
\end{proof}

\section{Geometric setting}\label{relaclas}

We give now the natural geometric interpretation of problem \eqref{equation0} and the extension formulation \eqref{equation1}. Thanks to Remark \ref{remark:Euclidean} and Theorem \ref{equivaexten}, the initial extension problem \eqref{equation1} is the scattering equation \eqref{eigp} in hyperbolic space, denoted by $X_1=\h^{n+1}$, with the metric $g^+=\frac{dy^2+|dx|^2}{y^2}$. Our point if view is to use the metric $g_0$ from \eqref{g0:introduction} as the representative of the conformal infinity, thus we need to rewrite the hyperbolic metric in a different normal form
\begin{equation}\label{normal-form}g^+=\frac{d\rho^2+g_\rho}{\rho^2} \quad\text{with}\quad g_\rho|_{\rho=0}=g_0,\end{equation}
for a suitable defining function $\rho$. Let us introduce some notation now. The conformal infinity (with an isolated singularity) is $M_1=\mathbb R^n\backslash \{0\}$,
which in polar coordinates can be represented as $M_1=\r^+\times\s^{n-1}$ and $|dx|^2=dr^2+r^2g_{\s^{n-1}}$,
or using this change of variable
$r=e^{-t}$, the Euclidean metric may be written as
\begin{equation}\label{relmetrclas}
 |dx|^2=e^{-2t}[dt^2+g_{\s^{n-1}}]=:e^{-2t}g_0.
\end{equation}
We consider now several models for hyperbolic space, identified with the Riemannian version of AdS space-time. These models are well known in cosmology since they provide the simplest background for the study of thermodynamically stable black holes (see \cite{Witten,Hawking-Page}, for instance, or the survey paper \cite{Chang-Qing-Yang}). Thus we write the hyperbolic metric as
\begin{equation}\label{HYmodel}
g^+=d\sigma^2+\cosh^2 \sigma \,dt^2+\sinh^2\sigma\,g_{\s^{n-1}},
\end{equation}
where $t\in\mathbb R$, $\sigma\in(0,\infty)$ $\theta\in\mathbb S^{n-1}$. %
Using the change of variable $R=\sinh \sigma$,
\begin{equation*}\label{metrR}
g^+=\frac{1}{1+R^2}\,dR^2+(1+R^2)\,dt^2+R^2g_{\s^{n-1}}.
\end{equation*}
This metric can be written in the normal form \eqref{normal-form} as
\begin{equation}\label{metrica21}
g^+=\rho^{-2}\left[d\rho^2+\left( 1+\tfrac{\rho^2}{4}\right)^2dt^2+\left( 1-\tfrac{\rho^2}{4}\right)^2 g_{\mathbb S^{n-1}}\right],
\end{equation}
for $\rho\in(0,2)$, $t\in \mathbb R$, $\theta\in\mathbb S^{n-1}$. Here we have used the relations
\begin{equation}\label{rhos}
\rho=2e^{-\sigma}\quad \text{and}\quad 1+R^2=\left( \tfrac{4-\rho^2}{4\rho} \right)^2
\end{equation}
Let $\bar g=\rho^2g^+$ be a compactification of $g^+$. Note that the apparent singularity at $\rho=2$ in the metric \eqref{metrica21} is just a consequence of the polar coordinate parametrization and thus the metric is smooth across this point.

We define now $X_2=(0,2)\times \s^1(L)\times\s^{n-1}$, with coordinates $\rho\in(0,2),\ t\in\s^1(L),\ \theta\in \mathbb S^{n-1}$,
and the same metric given by \eqref{metrica21}.
The conformal infinity $\{\rho=0\}$ is $M_2=\s^1(L)\times \s^{n-1}$,  with the metric given by $g_0=dt^2+g_{\s^{n-1}}.$

 Note that $(X_1,g^+_{\h^{n+1}})$ is a covering of $(X_2,g^+)$. Indeed, $X_2$ is the quotient $X_2=\h^{n+1}/\z\approx \r^n\times\s^1(L)$ with $\z$ the group generated by the translations, if we make the $t$ variable periodic. As a consequence, also $(M_1,|dx|^2)$ is a covering of $(M_2,g_0)$ after the  conformal change \eqref{relmetrclas}.\\

Summarizing, we denote $X=(0,2)\times\mathbb R\times\s^{n-1}$ and $M=\mathbb R\times\s^{n-1}$ and recall that the metric $\bar{g}=\rho^2g^+$ is given by
\begin{equation}\label{gbarfrac}
 \bar{g}=d\rho^2+\left(1+\tfrac{\rho^2}{4}\right)^2dt^2+\left(1-\tfrac{\rho^2}{4}\right)^2 g_{\s^{n-1}},\quad\text{and}\quad g_0=\bar{g}|_M=dt^2+g_{\s^{n-1}}.
\end{equation}
Equality \eqref{relmetrclas} shows that the metrics $|dx|^2$ and $g_0$ are conformally related and therefore using \eqref{wv},
we can write any conformal change of metric on $M$ as
\begin{equation}\label{hatEw}
g_v:=w^{\frac{4}{n-2\gamma}}|dx|^2=
v^{\frac{4}{n-2\gamma}}g_0.
\end{equation}
Our aim is to to find radial (in the variable $|x|$), positive solutions for \eqref{equation1} with an isolated singularity at the origin. Using $g_0$ as background metric on $M$, and writing the conformal change of metric in terms of $v$ as \eqref{hatEw},
this is equivalent to look for positive solutions $v=v(t)$ to \eqref{equation2} with $0<c_1\leq v\leq c_2$, and we hope to find those that are periodic in $t$.\\

Finally, we check that the background metric $g_0$ given in \eqref{gbarfrac} has constant
fractional curvature $Q^{g_0}_{\gamma}\equiv c_{n,\gamma}$. This is true because of the definition of $c_{n,\gamma}$ given in Proposition \ref{cte}, and the conformal equivalence given in \eqref{relmetrclas}. Thus, by construction, the trivial change $v_1\equiv 1$ is a solution to \eqref{equation2}.

\section{The conformal fractional Laplacian on $\r\times\s^{n-1}$.}\label{simbolo}

In this section we present the proof of Theorem \ref{th1}, i.e, the calculation of the Fourier symbol for the conformal fractional Laplacian on $\r\times\s^{n-1}$. This computation is based on the analysis of the scattering equation given in \eqref{eigp}-\eqref{formau} for the extension metric \eqref{metrica21}. We recall  that the scattering operator is defined as
\begin{equation}\label{scattering2}
P_\gamma^{g}w=S(s)w=W_1|_{\rho=0},
\end{equation}
and $s=\frac{n}{2}+\gamma$. The main step in the proof is to reduce \eqref{eigp} to an ODE that can be explicitly solved. Note that this idea of studying the scattering problem on certain Lorentzian models has been long used in Physics papers, but in general it is very hard to obtain explicit expressions for the solution and the majority of the existing results are numeric (see, for example, \cite{Fisicos}).

For the calculations below it is better to use the hyperbolic metric given in the coordinates \eqref{HYmodel}. Then the conformal infinity  corresponds to the value $\{\sigma=+\infty\}$.
 The scattering equation \eqref{eigp} can be written in terms of the variables $\sigma\in(0,\infty)$, $t\in\r$ and  $\theta\in\s^{n-1}$ as
\begin{equation}\label{eqs}
\partial_{{\sigma}{\sigma}}U+Q(\sigma)\partial_{\sigma}U+
\cosh^{-2}({\sigma})\partial_{tt}U+\sinh^{-2}(\sigma)\Delta_{\s^{n-1}}U+\left(\tfrac{n^2}{4}-\gamma^2\right)U=0,
\end{equation}
where $U=U({\sigma},t,\theta)$, and $$Q(\sigma)=\frac{\partial_{\sigma}(\cosh{\sigma}\sinh^{n-1}{\sigma})}{\cosh{\sigma}\sinh^{n-1}{\sigma}}.$$
With the change of variable \begin{equation}\label{cambioz}
z=\tanh(\sigma),\end{equation}
 equation \eqref{eqs} reads:
\begin{equation}\label{eqz}
\begin{split}
(1-z^2)^2\partial_{zz}U+\left(\tfrac{n-1}{z}-z\right)(1-z^2)\partial_z U+(1-z^2)\partial_{tt}U&
\\+\left(\tfrac{1}{z^2}-1\right)\Delta_{\s^{n-1}}U+\left(\tfrac{n^2}{4}-\gamma^2\right)U&=0.
\end{split}
\end{equation}
We compute the projection of equation \eqref{eqz} over each eigenspace of $\Delta_{\s^{n-1}}$. Given $k\in\n$, let $U_k(z,t)$ be the projection of $U$ over the eigenspace $\langle E_k\rangle$ associated to the eigenvalue $\mu_k=-k(k+n-2)$. Each $U_k$ satisfies the following equation:
\begin{equation}\label{equk}
(1-z^2)\partial_{zz}U_k+\left(\tfrac{n-1}{z}-z\right)\partial_z U_k+\partial_{tt}U_k+\mu_k\tfrac{1}{z^2}U_k
+\tfrac{\tfrac{n^2}{4}-\gamma^2}{1-z^2}U_k=0.
\end{equation}
Taking the Fourier transform \eqref{fourier} in the variable $t$ we obtain
\begin{equation}\label{equkfou}
(1-z^2)\partial_{zz}\widehat{U_k}+\left(\tfrac{n-1}{z}-z\right)\partial_z \widehat{U_k}
+\left[\mu_k\tfrac{1}{z^2}+\tfrac{\tfrac{n^2}{4}-\gamma^2}{1-z^2}-\xi^2\right]\widehat{U_k}=0.
\end{equation}
Fixed $k$ and $\xi$,
 we know that
\begin{equation}\label{u_k}
\widehat{U_k}=\widehat{w_k}(\xi)\varphi_k^{\xi}(z),
\end{equation}
where $\varphi:=\varphi_k^{\xi}(z)$ is the solution of the following ODE problem:
\begin{equation}\label{problemphi}\left\{
\begin{split}
&(1-z^2)\partial_{zz}\varphi+
\left(\tfrac{n-1}{z}-z\right)\partial_z\varphi+\left(\tfrac{\mu_k}{z^2}+\tfrac{\frac{n^2}{4}-\gamma^2}
{1-z^2}-\xi^2\right)\varphi =0,\\
&\text{has the expansion \eqref{formau} with }w\equiv 1\text{ near the conformal infinity }z=1,  \\
&\varphi \text{ is regular at }z=0.
\end{split}\right.
\end{equation}
 This ODE has only regular singular points $z$. The first equation in \eqref{problemphi} can be explicitly solved,
\begin{equation}\label{varphi1}
\begin{split}
\varphi(z)=&A(1-z^2)^{\frac{n}{4}-\frac{\gamma}{2}}z^{1-\frac{n}{2}+\sqrt{(\tfrac{n}{2}-1)^2-\mu_k}}
\Hyperg(a,b;c;z^2)\\
+&B(1-z^2)^{\frac{n}{4}-\frac{\gamma}{2}}z^{1-\frac{n}{2}-\sqrt{(\tfrac{n}{2}-1)^2-\mu_k}}\Hyperg(\tilde{a},\tilde{b};\tilde{c};,z^2),
\end{split}
\end{equation}
for any real constants $A,B$, where \begin{itemize}
        \item $a=\tfrac{-\gamma}{2}+\tfrac{1}{2}+\tfrac{\sqrt{(\tfrac{n}{2}-1)^2-\mu_k}}{2}+i\tfrac{\xi}{2}$,
        \item $b=\tfrac{-\gamma}{2}+\tfrac{1}{2}+\tfrac{\sqrt{(\tfrac{n}{2}-1)^2-\mu_k}}{2}-i\tfrac{\xi}{2}$,
        \item $c=1+\sqrt{(\tfrac{n}{2}-1)^2-\mu_k}$,
        \item $\tilde{a}=\tfrac{-\gamma}{2}+\tfrac{1}{2}-\tfrac{\sqrt{(\tfrac{n}{2}-1)^2-\mu_k}}{2}+i\tfrac{\xi}{2}$,
        \item $\tilde{b}=\tfrac{-\gamma}{2}+\tfrac{1}{2}-\tfrac{\sqrt{(\tfrac{n}{2}-1)^2-\mu_k}}{2}-i\tfrac{\xi}{2}$,
        \item $\tilde{c}=1-\sqrt{(\tfrac{n}{2}-1)^2-\mu_k}$,

      \end{itemize}
and $\Hyperg$ denotes the standard hypergeometric function introduced in Lemma \ref{propiedadeshiperg}. Note that we can write $\xi$ instead of $|\xi|$ in the arguments of the hypergeometric functions because $a=\bar{b}$, $\tilde{a}=\overline{\tilde{b}}$ and property \eqref{prop5} given in Lemma \ref{propiedadeshiperg}.

The regularity at the origin $z=0$ implies $B=0$ in \eqref{varphi1}.
Moreover, property \eqref{prop4} from Lemma \ref{propiedadeshiperg} makes it possible to rewrite $\varphi$ as
\begin{equation}\label{varphiz}
\begin{split}
\varphi(z)
=A&\left[\alpha(1+z)^{\frac{n}{4}-\frac{\gamma}{2}}(1-z)^{\frac{n}{4}-\frac{\gamma}{2}}z^{1-\frac{n}{2}
+\sqrt{(\tfrac{n}{2}-1)^2-\mu_k}} \right.\\ &
\cdot\Hyperg(a,b;a+b-c+1;1-z^2)\\
&+\beta (1+z)^{\frac{n}{4}+\frac{\gamma}{2}}(1-z)^{\frac{n}{4}+\frac{\gamma}{2}}z^{1-\frac{n}{2}
+\sqrt{(\tfrac{n}{2}-1)^2-\mu_k}}\\&
\left.\cdot\Hyperg(c-a,c-b;c-a-b+1;1-z^2)\right],
\end{split}
\end{equation}
where \begin{align}
\label{alpha}&\alpha=\tfrac{\Gamma\left(1+\sqrt{(\tfrac{n}{2}-1)^2-\mu_k}\right)\Gamma(\gamma)}
{\Gamma\left(\tfrac{1}{2}
+\tfrac{\gamma}{2}+\tfrac{\sqrt{(\tfrac{n}{2}-1)^2-\mu_k}}{2}-i\tfrac{\xi}{2}\right)\Gamma\left(\tfrac{1}{2}
+\tfrac{\gamma}{2}+\tfrac{\sqrt{(\tfrac{n}{2}-1)^2-\mu_k}}{2}+i\tfrac{\xi}{2}\right)},\\
\notag&\beta=\tfrac{\Gamma\left(1+\sqrt{(\tfrac{n}{2}-1)^2-\mu_k}\right)
\Gamma(-\gamma)}{\Gamma\left(\tfrac{-\gamma}{2}+\tfrac{1}{2}+\tfrac{\sqrt{(\tfrac{n}{2}-1)^2-\mu_k}}{2}
+i\tfrac{\xi}{2}\right)\Gamma\left(\tfrac{-\gamma}{2}+\tfrac{1}{2}+\tfrac{\sqrt{(\tfrac{n}{2}-1)^2-\mu_k}}{2}
-i\tfrac{\xi}{2}\right)}.
\end{align}
The constant coefficient $A$ will be fixed from the second statement in \eqref{problemphi}. From the definition of the scattering operator in \eqref{scattering2}, $\varphi$ must have the asymptotic expansion near $\rho=0$
\begin{equation}\label{expansionu}
\varphi(\rho)=\rho^{n-s}(1+...)+\rho^s(\widehat{S^k}(s)1+...),
\end{equation}
where $S^k(s)$ is the projection of the scattering operator $S(s)$ over the eigenspace $\langle E_k\rangle$.

We now use the changes of variable \eqref{cambioz} and \eqref{rhos}, obtaining
\begin{equation}\label{zrho}
z=\tanh(\sigma)=\frac{4-\rho^2}{4+\rho^2}=1-\frac{1}{2}\rho^2+\cdots .
\end{equation}
Therefore, substituting \eqref{zrho} into \eqref{varphiz} we can express $\varphi$ as a function on $\rho$ as follows
\begin{equation*}
\begin{split}
\varphi(\rho)\sim
A&\left[\alpha \rho^{\frac{n}{2}-\gamma}\Hyperg(a,b;a+b-c+1;\rho^2)\right.\\
&\,+\left.\beta \rho^{\frac{n}{2}+\gamma}\Hyperg(c-a,c-b;c-a-b+1;\rho^2)\right],\quad \text{as }\rho\to 0.
\end{split}
\end{equation*}
Using property \eqref{prop2} from Lemma \ref{propiedadeshiperg} below, we have that near the conformal infinity,
\begin{equation}\label{varphirhocero}
\begin{split}
\varphi(\rho)\simeq
 A\left[\alpha\rho^{\frac{n}{2}-\gamma}+\beta\rho^{\frac{n}{2}+\gamma}+\ldots\right] .
\end{split}
\end{equation}
Comparing \eqref{varphirhocero} with the expansion of $\varphi$ given in \eqref{expansionu}, we have  \begin{equation}\label{A}
A=\alpha^{-1},
\end{equation}
and
\begin{equation}\label{S(s)}
\widehat{S^k}(s)=%
\beta \alpha^{-1}.
\end{equation}
Recalling the definition of the conformal fractional Laplacian given in \eqref{pdg}, and taking into account \eqref{u_k}, we can assert that the Fourier symbol $\Theta^k_{\gamma}(\xi)$ for the projection $P_\gamma^k$ of the conformal fractional Laplacian $P_{\gamma}^{g_0}$ satisfies
\begin{equation*}\label{relsymbolS}
\Theta^k_{\gamma}(\xi)=\frac{\Gamma(\gamma)}{\Gamma(-\gamma)}2^{2\gamma}\widehat{S^k}(s).
\end{equation*}
From here we can calculate the value of this symbol and obtain \eqref{symbol}; just take \eqref{S(s)} into account and property \eqref{prop1g} from Lemma \ref{propiedadesgamma}.
This completes the proof of Theorem \ref{th1}.\\

\begin{remark}
When $\gamma=m$, an integer, we recover the principal symbol for the GJMS operators $P^{g_0}_m$. Indeed, from Theorem \ref{th1} we have that for any dimension $n>2m$, the Fourier symbol of $P^{g_0}_m$ is given by
\begin{equation*}
\begin{split}
\Theta^k_m(\xi)&=2^{2m}\frac{|\Gamma(\tfrac{1}{2}+\tfrac{m}{2}+\tfrac{\sqrt{(\tfrac{n}{2}-1)^2-\mu_k}}{2}
+\tfrac{\xi}{2}i)|^2}
{|\Gamma(\tfrac{1}{2}-\tfrac{m}{2}+\tfrac{\sqrt{(\tfrac{n}{2}-1)^2-\mu_k}}{2}+\tfrac{\xi}{2}i)|^2}\\&=
2^{2m}\prod_{j=1}^m\left(\tfrac{[4(m-j)-m+1+\sqrt{(\tfrac{n}{2}-1)^2+k(k+n-1)}]^2}{4}+\tfrac{\xi^2}{4}\right)\\
&=\Psi(m,n,k,\xi,\xi^2,...,\xi^{2m-1})+\xi^{2m},
\end{split}
\end{equation*}
where we have used the property \eqref{prop2g} of the Gamma function given in Lemma \ref{propiedadesgamma}. Note that $\Psi$ is a polynomial function on $\xi$ of degree less than $2m$.

For instance, for the classical case $m=1$,
$$\Theta^k_1(\xi)=\xi^2+\tfrac{(n-2)^2}{4}-\mu_k,\quad k=0,1,\ldots,$$
so we recover the usual conformal Laplacian $P^{g_0}_1$ given by
\begin{equation*}
P^k_1(v)=-\ddot{v}+\left[\tfrac{(n-2)^2}{4}-\mu_k \right]v, \quad k=0,1,\ldots,\end{equation*}
that is the operator appearing in \eqref{ODE1} when applied to radial functions.
\end{remark}

This proof also allows us to explicitly calculate the special defining function $\rho^*$ from Theorem \ref{E0}:
\begin{corollary}\label{corollary:rho*}
We have
$$(\rho^*)^{n-s}=\alpha^{-1} \left(\tfrac{4\rho}{4+\rho^2}\right)^{\frac{n}{2}-{\gamma}}
\Hyperg\left(\tfrac{n}{4}-\tfrac{\gamma}{2},\tfrac{n}{4}-\tfrac{\gamma}{2};
\tfrac{n}{2},\left(\tfrac{4-\rho^2}{4+\rho^2}\right)^2\right),$$
where $\alpha$ is the constant from \eqref{alpha}.
As a consequence, $\rho^*\in(0,\rho_0^*)$ where we have defined $(\rho_0^*)^{n-s}=\alpha^{-1}$.
\end{corollary}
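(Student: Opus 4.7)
The plan is to recognize Corollary \ref{corollary:rho*} as the direct specialization of the scattering computation from Theorem \ref{th1} to the $(k,\xi)=(0,0)$ mode. Recall from \eqref{E} that $E(\rho)=\rho^{-1-s}(-\Delta_{g^+}-s(n-s))\rho^{n-s}$, so the characterization $E(\rho^*)=0$ from Theorem \ref{E0} is equivalent to $U:=(\rho^*)^{n-s}$ solving the scattering equation \eqref{eigp} on $X$ with normal-form expansion \eqref{formau} having boundary data $w\equiv 1$. Since both the metric \eqref{metrica21} and the datum $w\equiv 1$ are invariant under translation in $t$ and rotation of $\theta\in\s^{n-1}$, only the trivial spherical harmonic $k=0$ and the zero frequency $\xi=0$ contribute, so $(\rho^*)^{n-s}$ is obtained by evaluating \eqref{varphi1}--\eqref{A} at those parameter values.

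Substituting $\mu_0=0$ into \eqref{varphi1} yields $\sqrt{(\tfrac n2-1)^2-\mu_0}=\tfrac n2-1$, so the two characteristic exponents of $z$ are $0$ and $2-n$; the singular branch is killed by the regularity requirement at $z=0$, forcing $B=0$. Setting $\xi=0$ in addition makes $a=b=\tfrac n4-\tfrac\gamma 2$ and $c=\tfrac n2$, leaving
\begin{equation*}
\varphi(z)=A\,(1-z^2)^{n/4-\gamma/2}\Hyperg\!\left(\tfrac n4-\tfrac\gamma 2,\tfrac n4-\tfrac\gamma 2;\tfrac n2;z^2\right),
\end{equation*}
with $A=\alpha^{-1}$ dictated by \eqref{A}; in this specialization $\alpha$ reduces to $\Gamma(\tfrac n2)\Gamma(\gamma)/\Gamma(\tfrac n4+\tfrac\gamma 2)^2$. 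Finally push back to the extension variable via $z=(4-\rho^2)/(4+\rho^2)$ from \eqref{zrho}, and use $1-z^2=16\rho^2/(4+\rho^2)^2$ to obtain $(1-z^2)^{n/4-\gamma/2}=\left(\tfrac{4\rho}{4+\rho^2}\right)^{n/2-\gamma}$; substitution produces the stated expression for $(\rho^*)^{n-s}$.

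For the range claim, the formula extends continuously to $\rho\in[0,2]$: as $\rho\to 0$ the prefactor $\left(\tfrac{4\rho}{4+\rho^2}\right)^{n/2-\gamma}$ vanishes, so $(\rho^*)^{n-s}\to 0$ in agreement with $\rho^*$ being a defining function, while at $\rho=2$ one has $z=0$, both the prefactor and $\Hyperg(\cdot,\cdot;\cdot;0)$ equal $1$, and $(\rho^*)^{n-s}=\alpha^{-1}=:(\rho_0^*)^{n-s}$. The strict bound $\rho^*\in(0,\rho_0^*)$ on $X$ itself follows from positivity of $U$ together with a monotonicity analysis of the scattering ODE in the $\sigma$-coordinate: regularity at the axis forces $U'(0)=0$, and plugging a regular expansion into the $k=0$ projection of \eqref{eqs} gives $U''(0)=-(n^2/4-\gamma^2)U(0)/n<0$ since $\gamma<n/2$, so $\sigma=0$ is a strict local maximum and $U$ decreases to $0$ at infinity.

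The main obstacle is essentially bookkeeping: the analytic content---the closed-form hypergeometric representation of the scattering eigenfunction---was already produced in the proof of Theorem \ref{th1}, and the corollary is just its simplest specialization, requiring only a correct reduction of $\alpha$, $a$, $b$, $c$ at $(k,\xi)=(0,0)$ and careful tracking of the variable change $z\leftrightarrow\rho$.
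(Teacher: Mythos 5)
Your proposal is correct and essentially matches the paper's argument: both specialize the hypergeometric solution \eqref{varphi1} of the scattering ODE at $k=0$, $\xi=0$, with $B=0$ by regularity and $A=\alpha^{-1}$ by the normalization \eqref{A}, then pull back from $z$ to $\rho$ via \eqref{zrho}. The only difference is cosmetic: you re-derive the identity $\rho^*=(\varphi_0^0)^{1/(n-s)}$ from $E(\rho^*)=0$ and the boundary expansion of Theorem \ref{E0}, whereas the paper simply cites this from Lemma 4.5 of \cite{MarChang}, and you add a short monotonicity remark at the axis $\sigma=0$ to justify the range claim $\rho^*\in(0,\rho_0^*)$ that the paper leaves implicit.
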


\begin{proof}
From the proof of Theorem \ref{E0}, which corresponds to Lemma 4.5 in \cite{MarChang}, we know that
$$\rho^*=(\varphi_0^0)^{\frac{1}{n-s}}(z),$$
where $\varphi$ is the solution of \eqref{problemphi}. Thus from formula \eqref{varphi1} for $B=0$ and the relation between $z$ and $\rho$ from \eqref{zrho} we arrive at the desired conclusion. The behavior when $\rho\to 2$ can be calculated directly from \eqref{varphi1} and, as a consequence, $(\rho_0^*)^{n-s}=\varphi(0)=\alpha^{-1}$.
\end{proof}

\begin{lemma}\label{propiedadeshiperg}
 \textnormal{ \cite{Abramowitz,SlavyanovWolfganglay}} Let $z\in\c$. The hypergeometric function is defined for $|z| < 1$ by the power series
$$ \Hyperg(a,b;c;z) = \sum_{n=0}^\infty \frac{(a)_n (b)_n}{(c)_n} \frac{z^n}{n!}=\frac{\Gamma(c)}{\Gamma(a)\Gamma(b)}\sum_{n=0}^\infty \frac{\Gamma(a+n)\Gamma(b+n)}{\Gamma(c+n)} \frac{z^n}{n!}.$$
It is undefined (or infinite) if $c$ equals a non-positive integer.
Some properties are
\begin{enumerate}
  \item The hypergeometric function evaluated at $z=0$ satisfies
  \begin{equation}\label{prop2}
  \Hyperg(a+j,b-j;c;0)=1; \  j=\pm1,\pm2,...
  \end{equation}
     \item If $|arg(1-z)|<\pi$, then
\begin{equation}\label{prop4}
  \begin{split}
  \Hyperg&(a,b;c;z)=
                     \frac{\Gamma(c)\Gamma(c-a-b)}{\Gamma(c-a)\Gamma(c-b)}
                     \Hyperg\left(a,b;a+b-c+1;1-z\right)
                      \\
                     +&(1-z)^{c-a-b}\frac{\Gamma(c)\Gamma(a+b-c)}
                     {\Gamma(a)\Gamma(b)}\Hyperg(c-a,c-b;c-a-b+1;1-z).
     \end{split} \end{equation}
\item The hypergeometric function is symmetric with respect to first and second arguments, i.e
\begin{equation}\label{prop5}
  \Hyperg(a,b;c;z)= \Hyperg(b,a;c;z).
  \end{equation}

\end{enumerate}
\end{lemma}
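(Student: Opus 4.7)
The three properties listed are classical facts about the Gauss hypergeometric function, so the plan is to sketch the standard arguments from the theory of the hypergeometric ODE rather than to develop anything new. The easy observations are \eqref{prop2} and \eqref{prop5}. For \eqref{prop2}, I would just plug $z=0$ into the defining power series: only the $n=0$ term survives and equals $(a+j)_0 (b-j)_0 / (c)_0 = 1$, independently of $j$. For \eqref{prop5}, I would note that the general term $(a)_n (b)_n / (c)_n \cdot z^n/n!$ is manifestly symmetric in the pair $(a,b)$, since the Pochhammer symbols commute as scalars, so interchanging them leaves each coefficient of the series invariant.

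For the nontrivial connection formula \eqref{prop4}, I would argue via the hypergeometric differential equation
\[
z(1-z)\,F''+\bigl(c-(a+b+1)z\bigr)F'-abF=0,
\]
which has regular singular points at $0,1,\infty$. Near $z=0$ the Frobenius exponents are $\{0,1-c\}$, giving the canonical solution $\Hyperg(a,b;c;z)$ (the one regular at $0$). Near $z=1$ the exponents are $\{0,c-a-b\}$, yielding two linearly independent local solutions $\Hyperg(a,b;a+b-c+1;1-z)$ and $(1-z)^{c-a-b}\Hyperg(c-a,c-b;c-a-b+1;1-z)$; this follows from the substitution $w=1-z$ which carries the equation to another hypergeometric equation with parameters shifted accordingly. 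Since the space of solutions is two-dimensional, $\Hyperg(a,b;c;z)$ must be a linear combination of these two local solutions near $z=1$, and the content of \eqref{prop4} is the explicit identification of the two constants.

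To pin down the connection constants, the cleanest route is Euler's integral representation
\[
\Hyperg(a,b;c;z)=\frac{\Gamma(c)}{\Gamma(b)\Gamma(c-b)}\int_0^1 t^{b-1}(1-t)^{c-b-1}(1-zt)^{-a}\,dt,
\]
valid for $\operatorname{Re} c>\operatorname{Re} b>0$ and $|\arg(1-z)|<\pi$. Splitting the integral at a suitable point and expanding $(1-zt)^{-a}$ and $(1-z)^{c-a-b}$ near $z=1$, one identifies the coefficient of the regular piece as $\Gamma(c)\Gamma(c-a-b)/[\Gamma(c-a)\Gamma(c-b)]$ and the coefficient of the $(1-z)^{c-a-b}$ piece as $\Gamma(c)\Gamma(a+b-c)/[\Gamma(a)\Gamma(b)]$, which are the Beta-function evaluations arising from the integral. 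Analytic continuation in $a,b,c$ then removes the parameter restrictions, and the sector condition $|\arg(1-z)|<\pi$ is exactly the region where both branches $(1-z)^{c-a-b}$ are well-defined.

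The only genuine obstacle is the identification of the constants in \eqref{prop4}, since everything else reduces to inspection of the power series or a direct substitution in the ODE. Rather than reproducing the Barnes-integral or integral-representation computation in detail, I would simply cite the statement from \cite{Abramowitz,SlavyanovWolfganglay} as the authors do, since this identity is standard and its derivation is not specific to the geometric problem at hand. The role of the lemma in the paper is purely as a computational tool used in passing from \eqref{varphi1} to \eqref{varphiz}.
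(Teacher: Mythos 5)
Your proposal is correct and matches the paper's treatment: the authors state these classical properties of the Gauss hypergeometric function without proof, citing \cite{Abramowitz,SlavyanovWolfganglay}, exactly as you ultimately do. Your sketches of the series verifications for \eqref{prop2} and \eqref{prop5} and of the ODE/Euler-integral derivation of the connection formula \eqref{prop4} are standard and sound, but nothing beyond the citation is required here.
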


\begin{lemma}\label{propiedadesgamma}
 \textnormal{ \cite{Abramowitz,SlavyanovWolfganglay}} Let $z\in\c$. Some properties of the Gamma function $\Gamma(z)$ are
\begin{align}
      &\Gamma(\bar{z})=\overline{\Gamma(z)},\label{prop1g}\\
  &\Gamma(z+1)=z\Gamma(z), \label{prop2g}\\
   &\Gamma(z)\Gamma\left(z+\tfrac{1}{2}\right)=2^{1-2z}\sqrt{\pi}\,\Gamma(2z).  \label{prop3g}
   \end{align}
 Let $\psi(z)$ denote the Digamma function defined by $$\psi(z)=\frac{d\ln\Gamma (z)}{dz}=\frac{\Gamma'(z)}{\Gamma(z)}.$$
 This function has the expansion
\begin{equation}\label{propdg}
\psi(z)=\psi(1)+\sum_{m=0}^{\infty}\left(\tfrac{1}{m+1}-\tfrac{1}{m+z}\right).
\end{equation}
Let $B(z_1,z_2)$ denote the Beta function defined by
\begin{equation*}
B(z_1,z_2)=\frac{\Gamma(z_1)\Gamma(z_2)}{\Gamma(z_1+z_2)}.
\end{equation*}
If $z_2$ is a fixed number and $z_1>0$ is big enough, then this function behaves
\begin{equation}\label{propbeta}
B(z_1,z_2)\sim \Gamma(z_2)(z_1)^{-z_2}.
\end{equation}
\end{lemma}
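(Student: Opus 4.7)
The lemma collects standard identities for the Gamma and Digamma functions together with an asymptotic formula for the Beta function; since the authors explicitly cite \cite{Abramowitz,SlavyanovWolfganglay}, my plan is to indicate how each item follows from the classical integral and Weierstrass product representations of $\Gamma$. I would take $\Gamma(z)=\int_0^\infty t^{z-1}e^{-t}\,dt$ as working definition for $\mathrm{Re}(z)>0$: then \eqref{prop1g} is immediate by complex-conjugating the integrand (using $t^{z-1}=e^{(z-1)\log t}$ on the positive real axis), and \eqref{prop2g} follows from a single integration by parts. Both identities extend to $\c\setminus(-\n\cup\{0\})$ by analytic continuation, since the Gamma function is meromorphic with simple poles at the non-positive integers.

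For the Legendre duplication formula \eqref{prop3g}, my plan is to compute the Beta integral $B(z,z)=\int_0^1 t^{z-1}(1-t)^{z-1}\,dt$ in two different ways: the substitution $t=(1+u)/2$ followed by $v=u^2$ gives $B(z,z)=2^{1-2z}B(z,\tfrac{1}{2})$, while the standard identity $B(z,z)=\Gamma(z)^2/\Gamma(2z)$ expresses the same quantity in terms of Gamma values. Combining these with $\Gamma(\tfrac{1}{2})=\sqrt{\pi}$ rearranges into the claimed identity, and analytic continuation extends it to all admissible $z$.

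For the Digamma expansion \eqref{propdg}, I would begin with the Weierstrass product
\begin{equation*}
\frac{1}{\Gamma(z)}=z\,e^{\gamma z}\prod_{m=1}^\infty \left(1+\tfrac{z}{m}\right)e^{-z/m},
\end{equation*}
where $\gamma$ is the Euler--Mascheroni constant, and take its logarithmic derivative to obtain $\psi(z)=-\gamma-\tfrac{1}{z}+\sum_{m=1}^\infty\left(\tfrac{1}{m}-\tfrac{1}{m+z}\right)$. Using $\psi(1)=-\gamma$ and re-indexing the sum so that the summation starts at $m=0$ (with the $-\tfrac{1}{z}$ term absorbed into the new $m=0$ summand) recovers precisely the displayed form. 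Finally, for the Beta asymptotic \eqref{propbeta}, I would apply Stirling's formula $\Gamma(z)=\sqrt{2\pi}\,z^{z-1/2}e^{-z}(1+O(1/z))$ to both $\Gamma(z_1)$ and $\Gamma(z_1+z_2)$ and verify that the ratio $\Gamma(z_1)/\Gamma(z_1+z_2)$ reduces to $z_1^{-z_2}(1+o(1))$ as $z_1\to\infty$ with $z_2$ fixed; multiplying by the constant $\Gamma(z_2)$ gives the result. There is no real conceptual obstacle, since each item is textbook material; the only mild technicality is tracking uniformity of the Stirling remainder in \eqref{propbeta} when $z_2$ is complex, but this is standard and not essential for the applications made of the lemma in the proof of Theorem~\ref{th1}.
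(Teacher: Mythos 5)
Your proposal is correct: the paper itself offers no proof of this lemma, simply citing standard references, and your arguments are precisely the textbook derivations those sources contain (Schwarz-reflection / conjugation of the integral for \eqref{prop1g}, integration by parts for \eqref{prop2g}, the Beta-integral derivation of Legendre duplication for \eqref{prop3g}, the logarithmic derivative of the Weierstrass product with the correct re-indexing for \eqref{propdg}, and Stirling for \eqref{propbeta}). All steps check out, including the telescoping re-indexing that absorbs the $-1/z$ term into the $m=0$ summand.
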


We end this section with a remark on the classical fractional Hardy inequality. On Euclidean space $(\mathbb R^n, |dx|^2)$, it is well known  that, for all $w\in\mathcal C_0^\infty(\mathbb R^n)$ and $\gamma\in\left(0,\frac{n}{2}\right)$,
\begin{equation}\label{fractional-Hardy}
\begin{split}c_H\int_{\mathbb R^n}\frac{|w|^2}{|x|^{2\gamma}}\,dx&\leq
\int_{\mathbb R^n}|\xi|^{2\gamma} |\hat w(\xi)|^2\,d\xi
\\&=\int_{\mathbb R^n} |(-\Delta)^{\frac{\gamma}{2}}w|^2\,dx=\int_{\mathbb R^n} w(-\Delta)^\gamma w\,dx.\end{split}\end{equation}
Moreover, the constant $c_H$ is sharp (although it is not achieved) and its value is given by
$$c_H=c_{n,\gamma},$$
which is the constant in Proposition \ref{cte}. This is not a coincidence, since the functions that are used in the proof of the sharpness statement are suitable approximations of \eqref{w1}. This constant was first calculated in \cite{Herbst}, but there have been many references \cite{Yafaev,Beckner:Pitts,Frank-Lieb-Seiringer:Hardy}, for instance.

A natural geometric context for the fractional Hardy inequality is obtained by  taking $g_0$ as a background metric, and using the changes  \eqref{wv} and \eqref{g0:introduction}. Indeed, using the conformal relation given by expression \eqref{relation-introduction}, we conclude that \eqref{fractional-Hardy} is equivalent to the following:
\begin{equation}\label{Hardy-v}
c_{n,\gamma}\int_{\mathbb R\times \mathbb S^{n-1}} v^2\dvol_{g_0}\leq \int_{\mathbb R\times \mathbb S^{n-1}} v (P_\gamma^{g_0} v)\dvol_{g_0},
\end{equation}
for every $v\in\mathcal C^{\infty}_0(\mathbb R\times \mathbb S^{n-1})$.

\section{ODE-type analysis}\label{section:ODE}
In this section we fix $\gamma\in(0,1)$. As we have explained, the fractional Yamabe problem with an isolated singularity at the origin is equivalent to the extension problem \eqref{equation1}. We look for radial solutions of the form \eqref{wv}. Based on our previous study, it is equivalent to consider solutions $v=v(t)$ of the extension problem \eqref{fracyamextv}, for the metric \eqref{metric_introduction}. In this section we perform an ODE-type analysis for the PDE problem \eqref{fracyamextv}.

Firstly we calculate
\begin{equation}\label{divergence-barg}
\begin{split}
 \divergence_{\bar{g}}(\rho^{a}\nabla_{\bar{g}} V)=&\sum_{i,j}\frac{1}{\sqrt{|\bar{g}|}}\partial_{i}(\bar{g}^{ij}\rho^{a}\sqrt{|\bar{g}|}\partial_jV)\\
=& \tfrac{1}{e(\rho)}\partial_{\rho}\left(\rho^{a}e(\rho)\partial_{\rho}V\right)+\tfrac{\rho^{a}}
{(1+\frac{\rho^2}{4})^2}\partial_{tt}V+\tfrac{\rho^{a}}{(1-\frac{\rho^2}{4})^2}\Delta_{\s^{n-1}}V,
\end{split}
\end{equation}
where \begin{equation*}
e(\rho)=\left(1+\tfrac{\rho^2}{4}\right)\left(1-\tfrac{\rho^2}{4}\right)^{n-1}.
\end{equation*}
Using the expression given in \eqref{E},
\begin{equation}\label{Erho}
E(\rho)=\tfrac{n-1+a}{4}\rho^a\frac{n-2+n\frac{\rho^2}{4}}{\left(1+\frac{\rho^2}{4}\right)\left( 1-\frac{\rho^2}{4}\right)}.
\end{equation}

\begin{remark}\label{solorho}
Let $V$ be the (unique) solution of \eqref{fracyamextv}. If $v$ does not depend on the spherical variable $\theta\in \mathbb S^{n-1}$, then $V$ does not either. Analogously, if $v$ is independent on $t$ and $\theta$, then $V$ is just a function of $\rho$. The proof is a straightforward computation using that the variables in \eqref{divergence-barg} are separated.
\end{remark}

 As a consequence of the previous remark, it is natural to look for solutions $V$ of \eqref{fracyamextv} that only depend on $\rho$ and $t$, i.e. solutions of
\begin{equation}\label{problemaradial}\left\{
\begin{split}
-\frac{1}{e(\rho)}\partial_{\rho}\left(\rho^{a}(e(\rho)\partial_{\rho}V\right)-
\frac{\rho^{a}}{(1+\frac{\rho^2}{4})^2}\partial_{tt}V+E(\rho)V&=0\ \text{ for } \rho\in(0,2),t\in\r,\\
V&=v\ \text{ on }\{\rho=0\},\\
-\tilde{d}_{\gamma}\lim_{\rho\rightarrow 0}\rho^{a}\partial_{\rho}V&=c_{n,\gamma}v^{\frac{n+2\gamma}{n-2\gamma}}\ \text{ on }\{\rho=0\}.
\end{split}
\right.\end{equation}
Now we take the special defining function $\rho^*$ given in Theorem \ref{E0}, whose explicit expression is given in Corollary \ref{corollary:rho*}. Then we can rewrite the original problem \eqref{fracyamextv} in $g^*$, defined on the extension $X^*=\{\rho\in(0,\rho_0^*),t\in\mathbb R,\theta\in \mathbb S^{n-1}\}$, as
\begin{equation}\label{Yamfracspec}\left\{
\begin{split}
-\divergence_{g^*}((\rho^*)^a\nabla_{g^*}V)&=0\ \text{ in }(X^*,g^*),\\
V&=v\ \text{ on }\{\rho^*=0\},\\
-\tilde{d}_{\gamma}\lim_{\rho^*\to 0}(\rho^*)^a\partial_{\rho^*}V +c_{n,\gamma}v&=c_{n,\gamma}v^{\frac{n+2\gamma}{n-2\gamma}}\ \text{ on }\{\rho^*=0\},
\end{split}\right.
\end{equation}
where $g^*=\frac{(\rho^*)^2}{\rho^2}\bar{g}$, for $\rho^*=\rho^*(\rho)$.

 Note that Proposition \ref{cte} calculates the value $Q_\gamma^{g_0}\equiv c_{n,\gamma}$. The advantage of \eqref{Yamfracspec} over the original \eqref{fracyamextv} is that it is a pure divergence elliptic problem and has nicer analytical properties.

Next, if we look for radial solutions (that depend only on $t$ and $\rho^*$), then the extension problem \eqref{Yamfracspec} reduces to:
\begin{equation}\label{eqfracesp}\left\{
\begin{split}
\frac{1}{e^*(\rho)}\partial_{\rho^*}\left((\rho^*)^{a}e^*(\rho)\partial_{\rho^*}V\right)
+\frac{(\rho^*)^{a}}
{\left(1+\tfrac{\rho^2}{4}\right)^{2}}\,\partial_{tt}V&=0 \text{ for }t\in\r,\rho^*\in(0,\rho^*_0),\\
v&=V\text{ on }\{\rho^*=0\},\\
-\tilde{d}_{\gamma}(\rho^*)^a\partial_{\rho^*}V  +c_{n,\gamma}v&=c_{n,\gamma}v^{\frac{n+2\gamma}{n-2\gamma}}\text{ on }\{\rho^*=0\},
\end{split}\right.
\end{equation}
where
$$e^*(\rho)=\left(\tfrac{\rho^*}{\rho}\right)^2 e(\rho).$$

Summarizing, we will concentrate in problems \eqref{problemaradial} and \eqref{eqfracesp}. In some sense \eqref{eqfracesp} is closer to the local equation \eqref{ODE1} and shares many of its properties. For instance, it has two critical points: $v_0\equiv0$ and $v_1\equiv1$, since these are the only constant solutions of the boundary condition $v=v^{\frac{n+2\gamma}{n-2\gamma}}$ on $\rho^*=0$. Moreover, by uniqueness of the solution and Remark \ref{solorho}, the only critical points in the extension are simply $V_0\equiv 0$ and $V_1\equiv 1$.

\begin{remark}
The calculation of the critical points $v_0\equiv 0$ and $v_1\equiv 1$ also holds for any $\gamma\in\left(0,\frac{n}{2}\right)$, since the corresponding extension problem shares many similarities with \eqref{eqfracesp} (c.f. \cite{MarChang,CaseChang,Ray}).
\end{remark}

The linearization at $V_1\equiv 1$ will be considered in Section \ref{section:linear}.

\subsection{A conserved Hamiltonian}

Here we give the proof of Theorem \ref{thctthamiltonian}. The idea comes from \cite{paperxavi}, where they consider layer solutions for semilinear equations with fractional Laplacian and a double-well potential. Multiply the first equation in \eqref{problemaradial} by $e(\rho)\partial_t V$, %
 and integrate with respect to $\rho\in(0,2)$, obtaining
\begin{equation*}
\begin{split}
-\int_{0}^{2} \partial_{\rho}\left(\rho^{a}e(\rho)\partial_{\rho}V\right)\partial_t V \,d\rho
-\int_{0}^{2}  \rho^{a}e_1(\rho)%
\partial_{tt}V\partial_{t}V\,d\rho+
\int_{0}^{2} \rho^a e_2(\rho)V\partial_t V\,d\rho=0,
\end{split}
\end{equation*}
where we have defined $e$, $e_1$, $e_2$ as in \eqref{e}.
We realize that $\partial_{tt}V\partial_{t}V=\frac{1}{2}\partial_t((\partial_tV)^2)$ and
$V\partial_t V=\frac{1}{2}\partial_t(V^2)$, thus integrating by parts in the first term above we get
\begin{equation*}
\begin{split}
&\int_{0}^{2}  \rho^{a}e(\rho)%
\partial_{\rho}V\partial_{t\rho} V
\,d\rho+\left(\rho^a e(\rho)\partial_{\rho}V\partial_tV\right)|_{\rho=0}\\&
-\partial_t\left(\tfrac{1}{2}\int_{0}^{2}  \rho^{a}e_1(\rho)(\partial_tV)^2\,d\rho\right)+
\partial_t\left(\tfrac{1}{2}\int_{0}^{2}\rho^a e_2(\rho)V^2\,d\rho\right)=0.
\end{split}
\end{equation*}
Here we have used the regularity of $V$ at $\rho=2$.
Again we note that $\partial_{\rho}V\partial_{t\rho} V=\frac{1}{2}\partial_t((\partial_{\rho}V)^2)$ and using the boundary condition, i.e., the third equation in \eqref{problemaradial},  %
we have
\begin{equation}\label{ham1}
\begin{split}
\tfrac{1}{2}\partial_t\left(\int_{0}^{2} \rho^{a}e(\rho)(\partial_{\rho}V)^2 \,d\rho\right)&
-\tfrac{1}{2}\partial_t\left(\int_{0}^{2}  \rho^{a}e_1(\rho)(\partial_tV)^2\,d\rho\right)\\
&+\tfrac{1}{2}\partial_t\left(\int_{0}^{2}  \rho^a e_2(\rho)V^2\,d\rho\right)=%
\frac{c_{n,\gamma}}{\tilde{d}\gamma}v^{\frac{n+2\gamma}{n-2\gamma}}\partial_tv .
\end{split}
\end{equation}
Define
 $$G(v)=C_{n,\gamma}v^{\frac{2n}{n-2\gamma}},$$
where the constant is defined in \eqref{constantC}.
In this way, we have from \eqref{ham1} that
\begin{equation*}
\begin{split}
&\tfrac{1}{2}\partial_t\int_{0}^{2} \left\{ \rho^{a}e(\rho)(\partial_{\rho}V)^2
-\rho^{a}e_1(\rho)(\partial_tV)^2
+ \rho^a e_2(\rho)V^2\right\}\,d\rho-\partial_t(G(v))=0.
\end{split}
\end{equation*}
So we can conclude that the Hamiltonian
\begin{equation*}
-H_\gamma(t):=\tfrac{1}{2}\int_{0}^{2}  \rho^{a}\left\{e(\rho)(\partial_{\rho}V)^2
-e_1(\rho)(\partial_tV)^2
+ e_2(\rho)V^2\right\}\,d\rho
-G(v),
\end{equation*}
is constant respect to $t$. This concludes the proof of Theorem \ref{thctthamiltonian}.

\begin{remark}
One can rewrite the Hamiltonian in terms of the defining function $\rho^*$. For this, we may follow similar computations as above but starting with equation \eqref{eqfracesp}. Indeed, let $V$ be a solution of \eqref{Yamfracspec}, then the new Hamiltonian quantity
\begin{equation}\label{Hamiltonian*}
\begin{split}
H_\gamma^*(t):=&\frac{c_{n,\gamma}}{\tilde{d}_{\gamma}}\left(\tfrac{n-2\gamma}{2n}v^{\frac{2n}{n-2\gamma}}
-\tfrac{1}{2}v^2\right)\\
&+\tfrac{1}{2}\int_0^{\rho^*_0}(\rho^*)^{a}
\left\{e_1^*(\rho)(\partial_tV)^2-e^*(\rho)(\partial_{\rho^*}V)^2\right\}\,d\rho^*
\end{split}\end{equation}
is constant respect to $t$. Here
$$e^*(\rho)=\left(\tfrac{\rho^*}{\rho}\right)^2,\quad e_1^*(\rho)=\left(\tfrac{\rho^*}{\rho}\right)^2 e_1(\rho).$$
This quantity $H_\gamma^*$ is the natural generalization of \eqref{Hamiltonian1}.
\end{remark}

Now we observe that in the local case, the Hamiltonian \eqref{Hamiltonian1} is a convex function in the domain we are interested, thus its level sets are well defined closed trajectories around the equilibrium $v_1\equiv 1$. We would like to have the analogous result for the Hamiltonian quantity $H^*_\gamma$ from \eqref{Hamiltonian*}. This is a very interesting open question that we conjecture to be true. In any case, the second variation for $H^*_\gamma$ near this equilibrium is:
\begin{equation*}\begin{split}
\left.\frac{d^2}{d\epsilon^2}\right|_{\epsilon=0}H^*_\gamma(V_1+\epsilon V)&=
\tfrac{c_{n,\gamma}}{\tilde d_\gamma}\tfrac{4\gamma}{n-2\gamma}v^2\\
&+\tfrac{1}{2}\int_0^{\rho_0^*} (\rho^*)^a\tfrac{(\rho^*)^2}{\rho^2} \left\{e_1(\rho)(\partial_t V)^2-e(\rho)(\partial_{\rho^*} V)^2\right\}d\rho^*.
\end{split}\end{equation*}

\section{The homoclinic solution}\label{section:explicit}

For this section we will take $\gamma\in\left(0,\tfrac{n}{2}\right)$, since it does not depend on the extension problem \eqref{equation1}. It is clear that the standard bubble \eqref{sphere} is a solution of equation \eqref{equation0} that has a removable singularity at the origin. Note that, because of our choice of the constant $c_{n,\gamma}$, we need to normalize it by a positive multiplicative constant. We prove here that, on the boundary phase portrait, the equilibrium $v_1\equiv 1$ stays always bounded by this homoclinic solution in a boundary phase portrait. More precisely:

\begin{proposition}
The positive function
\begin{equation}\label{C}
v_\infty(t)=C(\cosh t)^{-\frac{n-2\gamma}{2}},\quad \text{ with}\quad C=
\left(c_{n,\gamma}\frac{\Gamma(\frac{n}{2}-\gamma)}{\Gamma(\frac{n}{2}+\gamma)}\right)^{-\frac{n-2\gamma}
{4\gamma}}>1\equiv v_1,
\end{equation}
is a smooth solution of the fractional Yamabe problem \eqref{fracyamextv}. The value of $c_{n,\gamma}$ is given in Proposition \ref{cte}.
\end{proposition}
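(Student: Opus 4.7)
The idea is to recognize $v_\infty$ as (a constant multiple of) the conformal factor that turns the background cylindrical metric $g_0$ on $\mathbb R\times \mathbb S^{n-1}$ into the round metric on $\mathbb S^n$, so that the nonlinear equation for $v_\infty$ follows from the already-known value of the fractional curvature of the sphere, \eqref{fqsphere}. The precise identification comes from stereographic projection together with the Emden--Fowler change $r=e^{-t}$: writing $g_{\mathbb S^n}=\tfrac{4}{(1+|x|^2)^2}|dx|^2$ and using $|dx|^2=e^{-2t}g_0$ from \eqref{g0:introduction}, the factor $e^{-2t}$ combines with $(1+e^{-2t})^{-2}$ to give
\begin{equation*}
g_{\mathbb S^n}=\frac{4e^{-2t}}{(1+e^{-2t})^2}\,g_0=\frac{1}{\cosh^2 t}\,g_0=\tilde v^{\frac{4}{n-2\gamma}}g_0,\qquad \tilde v(t):=(\cosh t)^{-\frac{n-2\gamma}{2}}.
\end{equation*}
Thus $v_\infty=C\tilde v$ is the candidate, and what remains is to determine the constant $C$ and then invoke Theorem \ref{equivaexten} to pass from the boundary $P_\gamma^{g_0}$-equation to a genuine solution of the extension problem \eqref{fracyamextv}; smoothness of $V$ follows from standard regularity for degenerate elliptic equations with weight $\rho^a$, together with smoothness of $v_\infty$ away from $t=\pm\infty$.

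Next I would compute the fractional curvature. Since $g_{v_\infty}=C^{\frac{4}{n-2\gamma}}g_{\mathbb S^n}$, the constant conformal rescaling $g\mapsto \lambda^2 g$ transforms $Q_\gamma$ as $Q_\gamma^{\lambda^2 g}=\lambda^{-2\gamma}Q_\gamma^g$ (this is just the conformal property \eqref{conformproperty} applied to a constant $w$). Therefore
\begin{equation*}
Q_\gamma^{g_{v_\infty}}=C^{-\frac{4\gamma}{n-2\gamma}}\,Q_\gamma^{g_{\mathbb S^n}}=C^{-\frac{4\gamma}{n-2\gamma}}\,\frac{\Gamma(\tfrac n2+\gamma)}{\Gamma(\tfrac n2-\gamma)},
\end{equation*}
by \eqref{fqsphere}. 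The equation \eqref{Qequation} in the form $P_\gamma^{g_0}(v_\infty)=Q_\gamma^{g_{v_\infty}}v_\infty^{\frac{n+2\gamma}{n-2\gamma}}$ will then coincide with the normalized Yamabe equation \eqref{equation2} precisely when $Q_\gamma^{g_{v_\infty}}=c_{n,\gamma}$; solving this algebraic equation for $C$ gives exactly the expression in \eqref{C}.

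Finally I would verify $C>1$. Because the exponent $-(n-2\gamma)/(4\gamma)$ is negative, this amounts to showing
\begin{equation*}
c_{n,\gamma}\,\frac{\Gamma(\tfrac{n}{2}-\gamma)}{\Gamma(\tfrac{n}{2}+\gamma)}<1.
\end{equation*}
Substituting the explicit value of $c_{n,\gamma}$ from Proposition \ref{cte} and applying the duplication formula \eqref{prop3g} from Lemma \ref{propiedadesgamma} to both $\Gamma(\tfrac n2\pm\gamma)$, the $2^{2\gamma}$ factors cancel and the inequality reduces to
\begin{equation*}
\frac{\Gamma(z_1)}{\Gamma(z_1+\tfrac12)}<\frac{\Gamma(z_2)}{\Gamma(z_2+\tfrac12)},\qquad z_1=\tfrac12\bigl(\tfrac n2+\gamma\bigr)>z_2=\tfrac12\bigl(\tfrac n2-\gamma\bigr)>0,
\end{equation*}
which follows from the log-convexity of $\Gamma$ (equivalently, the monotonicity of $\Gamma(z+\tfrac12)/\Gamma(z)$ on $(0,\infty)$, a classical consequence of Gautschi's inequality). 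The only delicate point in the whole argument is this last monotonicity check together with keeping track of the various exponents coming from the conformal rescaling; everything else is a direct application of the conformal invariance of $P_\gamma$ and the geometric identification of $v_\infty$ with the stereographic conformal factor.
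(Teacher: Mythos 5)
Your proposal is correct and follows essentially the same route as the paper: identify $v_\infty$ (up to a constant) as the conformal factor sending $g_0$ to $g_{\mathbb S^n}$, use the scaling property $Q_\gamma^{\lambda^2 g}=\lambda^{-2\gamma}Q_\gamma^g$ together with the known value \eqref{fqsphere} of $Q_\gamma^{g_{\mathbb S^n}}$ to solve for $C$, and then verify $C>1$. The only (minor) departure is the last step. After applying the duplication formula \eqref{prop3g}, the paper's auxiliary quantity is
\begin{equation*}
X(n,\gamma)=\frac{\Gamma(z_1)/\Gamma(z_1+\tfrac12)}{\Gamma(z_2)/\Gamma(z_2+\tfrac12)},\qquad z_1=\tfrac12\bigl(\tfrac n2+\gamma\bigr),\ \ z_2=\tfrac12\bigl(\tfrac n2-\gamma\bigr),
\end{equation*}
and the paper proves $X(n,\gamma)<1$ via an explicit digamma-series computation (Lemma \ref{Xcrecimiento}, formula \eqref{formulon}) showing $\partial_\gamma\log X<0$ with $X(n,0)=1$. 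You instead observe directly that $z\mapsto\Gamma(z)/\Gamma(z+\tfrac12)$ is decreasing on $(0,\infty)$ (since $\psi(z+\tfrac12)>\psi(z)$ by log-convexity of $\Gamma$), and $z_1>z_2$ forces the ratio to be $<1$. This is the same underlying monotonicity stated more cleanly, without the series manipulation; your parenthetical appeal to Gautschi's inequality is not needed (and is a slightly imprecise citation) since log-convexity alone suffices. Your opening stereographic-projection computation and the final remark on passing from the boundary equation for $P_\gamma^{g_0}$ to the extension \eqref{fracyamextv} via Theorem \ref{equivaexten} make explicit two points the paper leaves implicit, which is a useful addition rather than a genuine difference of approach.
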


\begin{proof}
The canonical metric on the sphere, rescaled by a constant, maybe written as
\begin{equation*}\label{sphmetric}
g_C=C^{\frac{4}{n-2\gamma}}g_{\s^n}=[C(\cosh t)^{-\frac{n-2\gamma}{2}}]^{\frac{4}{n-2\gamma}}g_0.
\end{equation*}
We choose $C$ such that the fractional curvature of the standard sphere is normalized to
\begin{equation}\label{Qimpongo}
Q^{g_C}_{\gamma}\equiv c_{n,\gamma}.
\end{equation}
Now we use the conformal property \eqref{confQ} for the operator $P^{g_{\s^n}}_{\gamma}$:
\begin{equation}\label{PgC}
P_{\gamma}^{g_{\s^n}}(C)=C^{\frac{n+2\gamma}{n-2\gamma}}Q_{\gamma}^{g_C}.
\end{equation}
One checks that the fractional curvature is homogeneous of order $\gamma$ under rescaling of the metric. Indeed, because of \eqref{PgC} and the linearity of the operator $P_{\gamma}$
\begin{equation}\label{QC}
\begin{split}
Q_{\gamma}^{g_C}&=C^{-\frac{n+2\gamma}{n-2\gamma}}P_{\gamma}^{g_{\s^n}}(C)=C^{-\frac{(n+2\gamma)}
{n-2\gamma}+1}
P_{\gamma}^{g_{\s^n}}(1)=C^{-\frac{4\gamma}{n-2\gamma}}Q_{\gamma}^{g_{\s^n}}.\\
\end{split}
\end{equation}
Comparing equalities \eqref{Qimpongo} and \eqref{QC}, together with the value of the curvature on the standard sphere \eqref{fqsphere} we find the precise value of $C$ as claimed in \eqref{C}.

Next, let us check that the value of the constant $C$ is larger than one.
 Because of Proposition \ref{cte} we have to test that
$$2^{2\gamma}\left(\frac{\Gamma(\frac{1}{2}(\frac{n}{2}+\gamma))}{\Gamma(\frac{1}{2}(\frac{n}{2}-\gamma))}\right)^2
\frac{\Gamma(\frac{n}{2}-\gamma)}{\Gamma(\frac{n}{2}+\gamma)}<1.$$
Using the property \eqref{prop3g} of the Gamma function, given in Lemma \ref{propiedadesgamma}, we only need to verify that
\begin{equation*}
X(n,\gamma):=\frac{\Gamma(\frac{1}{2}(\frac{n}{2}+\gamma))}{\Gamma(\frac{1}{2}(\frac{n}{2}-\gamma))}\frac{\Gamma(\frac{1}{2}(\frac{n}{2}-\gamma)
+\frac{1}{2})}{\Gamma(\frac{1}{2}(\frac{n}{2}+\gamma)+\frac{1}{2})}<1.
\end{equation*}
Thanks to Lemma \ref{Xcrecimiento} below, it is enough to see that
\begin{equation*}
X(n,0)\leq 1\quad \forall n,
\end{equation*}
which holds trivially.
\end{proof}

\begin{lemma}\label{Xcrecimiento}
The function $X(n,\gamma)$ defined as follows
\begin{equation*}\label{Xng}
X(n,\gamma):=\frac{\Gamma(\frac{1}{2}(\frac{n}{2}+\gamma))}
{\Gamma(\frac{1}{2}(\frac{n}{2}-\gamma))}\frac{\Gamma(\frac{1}{2}(\frac{n}{2}-\gamma)+\frac{1}{2})}
{\Gamma(\frac{1}{2}(\tfrac{n}{2}+\gamma)+\tfrac{1}{2})},
\end{equation*}
is increasing in $n$, and decreasing in $\gamma$.
\end{lemma}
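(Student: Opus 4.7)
The plan is to reduce the monotonicity statement to two elementary facts about the Digamma function $\psi$ from \eqref{propdg}: namely that the auxiliary function
\[
f(x) := \psi\!\left(x+\tfrac{1}{2}\right) - \psi(x)
\]
is strictly positive and strictly decreasing on $(0,\infty)$.

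First I would introduce the shorthand $a := \frac{n}{4}-\frac{\gamma}{2}$ and $b := \frac{n}{4}+\frac{\gamma}{2}$, so that $0<a<b$ and
\[
X(n,\gamma) = \frac{\Gamma(b)\,\Gamma(a+\tfrac{1}{2})}{\Gamma(a)\,\Gamma(b+\tfrac{1}{2})}.
\]
Taking logarithmic derivatives and using $\partial_n a = \partial_n b = \frac{1}{4}$, $\partial_\gamma a = -\frac{1}{2}$, $\partial_\gamma b = \frac{1}{2}$, a direct computation gives
\[
\partial_n \log X = \tfrac{1}{4}\bigl[f(a) - f(b)\bigr], \qquad \partial_\gamma \log X = -\tfrac{1}{2}\bigl[f(a) + f(b)\bigr].
\]
Thus the claim reduces to showing $f(a)>f(b)$ (for monotonicity in $n$) and $f(a)+f(b)>0$ (for monotonicity in $\gamma$); both follow if $f>0$ and $f'<0$ on $(0,\infty)$.

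Next I would extract both properties from \eqref{propdg}. Differencing the series termwise,
\[
f(x) = \sum_{m=0}^{\infty}\left(\frac{1}{m+x} - \frac{1}{m+x+\tfrac{1}{2}}\right)
     = \sum_{m=0}^{\infty}\frac{1/2}{(m+x)(m+x+\tfrac{1}{2})},
\]
which is manifestly positive for $x>0$. Differentiating \eqref{propdg} once gives the standard identity $\psi'(x)=\sum_{m\ge 0}(m+x)^{-2}$, and hence
\[
f'(x) = \sum_{m=0}^{\infty}\left(\frac{1}{(m+x+\tfrac{1}{2})^{2}} - \frac{1}{(m+x)^{2}}\right) < 0
\]
for every $x>0$. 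These two facts, combined with $a<b$, yield $f(a)-f(b)>0$ and $f(a)+f(b)>0$, so $\partial_n\log X>0$ and $\partial_\gamma\log X<0$, completing the proof.

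The only mildly delicate point is justifying termwise differentiation of the Digamma series, but uniform convergence on compact subsets of $(0,\infty)$ (from the telescoping nature of \eqref{propdg} and the $O(m^{-2})$ tail after differentiating) takes care of this without incident. There is no serious obstacle; the entire argument is really just the observation that monotonicity in $n$ and in $\gamma$ are both controlled by the single function $f$, whose sign and monotonicity are transparent from the series representation.
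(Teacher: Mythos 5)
Your argument is correct and follows essentially the same path as the paper: take the logarithmic derivative of $X$, express it in terms of the Digamma function, and read off the sign from the series expansion \eqref{propdg}. The only difference is organizational — you route both monotonicity claims through the single auxiliary function $f(x)=\psi(x+\tfrac12)-\psi(x)$ and its sign/monotonicity, whereas the paper directly collapses each four-term digamma combination into a single explicit series with a manifestly signed summand; the two are interchangeable here.
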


\begin{proof}
If we denote $\psi(z)$ the Digamma function from Lemma \ref{propiedadesgamma}, we can use the expansion \eqref{propdg} to study the growth of the function $X(n,\gamma)$ with respect to $n$ and $\gamma$.
First,
\begin{equation*}
\begin{split}\frac{\partial}{\partial n}(\log X(n,\gamma))&=\tfrac{1}{4}\left(\psi(\tfrac{n}{4}+\tfrac{\gamma}{2})+\psi(\tfrac{n}{4}-\tfrac{\gamma}{2}
      +\tfrac{1}{2})-\psi(\tfrac{n}{4}-\tfrac{\gamma}{2})-\psi(\tfrac{n}{4}+\tfrac{\gamma}{2}+\tfrac{1}{2})
      \right)  %
      \\&=\tfrac{\gamma}{4}\sum_{m=0}^\infty\tfrac{m+\frac{n}{4}+\frac{1}{4}}{\left[\left(m+\frac{n}{4}\right)^2
      -\frac{\gamma^2}{4}\right]\left[\left(m+\frac{n}{4}+\frac{1}{2}\right)^2
      -\frac{\gamma^2}{4}\right]}>0.
      \end{split}\end{equation*}
and
  \begin{equation}\label{formulon}
     \begin{split}\frac{\partial}{\partial \gamma}(\log X(n,\gamma))&=\tfrac{1}{2}\left(\psi(\tfrac{n}{4}+\tfrac{\gamma}{2})-\psi(\tfrac{n}{4}-\tfrac{\gamma}{2}
      +\tfrac{1}{2})+\psi(\tfrac{n}{4}-\tfrac{\gamma}{2})-\psi(\tfrac{n}{4}+\tfrac{\gamma}{2}+\tfrac{1}{2})
      \right)\\
      &=-\tfrac{1}{2}\sum_{m=0}^{\infty}\left[\tfrac{\left(m+\tfrac{n}{4}+\tfrac{1}{2}\right)
      \left(m+\tfrac{n}{4}\right)+\tfrac{\gamma^2}{4}}{\left((m+\tfrac{n}{4}+\tfrac{1}{2})^2-\tfrac{\gamma^2}
      {4}\right)\left((m+\tfrac{n}{4})^2-\tfrac{\gamma^2}{4}\right)}\right]<0.
\end{split}\end{equation}

\end{proof}

\section{Linear analysis}\label{section:linear}

Let us say a few words about stability. Let $v_*$ be a solution of \eqref{equation2}. The corresponding linearized equation  is
$$P_\gamma^{g_0} v=c_{n,\gamma}\tfrac{n+2\gamma}{n-2\gamma}\, v_*^{\frac{4\gamma}{n-2\gamma}}v.$$
We say that $v_*$ is a stable solution of \eqref{equation2} if
\begin{equation}\label{stability}
\int_M v (P^{g_0}_\gamma v)\dvol_{g_0}-c_{n,\gamma} \tfrac{n+2\gamma}{n-2\gamma}\int_{M}v_*^{\frac{4\gamma}{n-2\gamma}} v^2\dvol_{g_0}\geq 0,\quad \text{for all}\quad v\in\mathcal C_0^\infty(M).
\end{equation}
We observe here that the equilibrium $v_1\equiv 1$ is not a stable solution for \eqref{equation2} just by comparing the constant appearing in \eqref{stability} and in the Hardy inequality \eqref{Hardy-v}. In addition, one easily checks that the equilibrium solution $v_0\equiv 0$ is stable.

But it is more interesting to look at the explicit solution $v_\infty$ given in \eqref{C}. It follows from the Hardy inequality \eqref{Hardy-v} that this explicit solution is not stable. The kernel of the linearization at $v_\infty$ is calculated in \cite{Davila-delPino-Sire}, where they show that, although non-trivial, is non-degenerate, i.e., is generated by translations and dilations of the standard bubble.\\

Let us look more closely at the spectrum of the operator $P^{g_0}_{\gamma}$.
It is well known that $P^{g_0}_{\gamma}$ is self-adjoint (\cite{GrahamZorski}), and then %
we can compute its first eigenvalue through the Rayleigh quotient. Thus we minimize
\begin{equation*}
\inf_{v\in\mathcal C_0^\infty(M)}\frac{\int_MvP_{\gamma}^{g_0}v\dvol_{g_0}}{\int_{M}v^2\dvol_{g_0}},
\end{equation*}
where $M=\r\times\mathbb S^{n-1}$. We can apply Theorem 4.2 and Corollary 4.3 in \cite{MarQing} (or the Hardy inequality \eqref{Hardy-v}) to conclude that $P^{g_0}_{\gamma}$ is positive-definite. Moreover, the first eigenspace is of dimension one.\\

Now we consider the linear analysis around the equilibrium solution $v_1\equiv 1$. In order to motivate our results, let us explain what happens in the local case $\gamma=1$ for the linearization (see \cite{Mazzeo-Pacard:isolated,Mazzeo-Pollack-Uhlenbeck:moduli-spaces,KMPS}). In these papers the authors actually characterize the spectrum for the linearization of the equation
$$P_1^{g_0} v=\tfrac{(n-2)^2}{4} v^{\frac{n+2}{n-2}},$$
given by (after projection over each eigenspace  $\langle E_k\rangle$, $k=0,1,\ldots$)
$$-\ddot{v}-[n-2+\mu_k]v=0.$$
Note that this equation has periodic solutions only for $k=0$, of period $L_1=\frac{2\pi}{\sqrt{\lambda^0}}$ for $\lambda^0=n-2$. Thus we recover \eqref{L1}. For the rest of $k=1,\ldots,$ the corresponding $\lambda^k=n-2+\mu_k<0$, so we do not get periodic solutions.\\

The linearization of equation \eqref{equation2} around the equilibrium $v_1\equiv 1$ is given by
\begin{equation}\label{conforlinearizada}
\begin{split}
P^{g_0}_{\gamma}v=c_{n,\gamma}\tfrac{n+2\gamma}{n-2\gamma}v
\end{split}
\end{equation}
Here we will calculate the period of solutions for this linearized problem (for the projection $k=0$), as stated in Theorem \ref{theorem:linearization},  by the method of separation of variables. We also conjecture that there are not periodic solutions for the linearized problem \eqref{linearization-k} for the rest of $k=1,...$, as it happens in the classical clase.

Therefore, we consider the projection of  equation \eqref{eqs} over each eigenspace $\langle E_k\rangle$, $k=0,1,\ldots$. Let
$$U_k(z,t)=T(t)Z(z),$$
be a solution of \eqref{equk}. Then
\begin{equation*}
(1-z^2)\frac{Z''(z)}{Z(z)}+\left(\tfrac{n-1}{z}-z\right) \frac{Z'(z)}{Z(z)}+\frac{\frac{n^2}{4}-\gamma^2}{1-z^2}+\frac{\mu_k}{z^2}
=-\frac{T''(t)}{T(t)}=\lambda^k,
\end{equation*}
for a constant  $\lambda^k:=\lambda^k(\gamma)\in\mathbb R$. We are only interested in the case $\lambda>0$, which is the one that leads to periodic solutions in the variable $t$. The  period would be calculated from $L^k:=L^k(\gamma)=\frac{2\pi}{\sqrt {\lambda^k}}$.

 Note that the equation for $Z(z)$ is simply \eqref{equkfou} with $\xi^2$ replaced by $\lambda^k$. From the discussion in Section \ref{simbolo}, in particular \eqref{varphiz}, \eqref{A} and \eqref{S(s)} we have that
\begin{equation*}
\begin{split}
Z(z)=&(1+z)^{\frac{n}{4}-\frac{\gamma}{2}}(1-z)^{\frac{n}{4}-\frac{\gamma}{2}}z^{1-\frac{n}{2}
+\sqrt{(\tfrac{n}{2}-1)^2-\mu_k}} \\ &
\Hyperg(a,b;a+b-c+1;1-z^2)\\
+&\kappa(1+z)^{\frac{n}{4}+\frac{\gamma}{2}}(1-z)^{\frac{n}{4}+\frac{\gamma}{2}}z^{1-\frac{n}{2}
-\sqrt{(\tfrac{n}{2}-1)^2-\mu_k}}\\&
\Hyperg(c-a,c-b;c-a-b+1;1-z^2),
\end{split}
\end{equation*}
where \begin{align*}
a&=-\tfrac{\gamma}{2}+\tfrac{1}{2}+\tfrac{\sqrt{(\tfrac{n}{2}-1)^2-\mu_k}}{2}
            +i\tfrac{\sqrt{\lambda^k}}{2}\\
b&=-\tfrac{\gamma}{2}+\tfrac{1}{2}+\tfrac{\sqrt{(\tfrac{n}{2}-1)^2-\mu_k}}{2}
            -i\tfrac{\sqrt{\lambda^k}}{2},\\
c&=1+\sqrt{(\tfrac{n}{2}-1)^2-\mu_k},\\ \kappa&=\tfrac{\Gamma(-\gamma)\left|\Gamma\large(\tfrac{1}{2}+\tfrac{\gamma}{2}
+\tfrac{\sqrt{(\tfrac{n}{2}-1)^2-\mu_k}}{2}
    +i\frac{\sqrt{\lambda^k}}{2}\large)\right|^2}{\Gamma(\gamma)\left|\Gamma\large(\tfrac{1}{2}
    -\tfrac{\gamma}{2}
    +\tfrac{\sqrt{(\tfrac{n}{2}-1)^2-\mu_k}}{2}+i\tfrac{\sqrt{\lambda^k}}{2}\large)\right|^2}.
\end{align*}
We use the change of variable \eqref{zrho} to analyze the asymptotic behavior of $Z$ near the conformal infinity $\rho=0$
\begin{equation*}
Z\sim\rho^{\frac{n}{2}-\gamma}+\kappa\rho^{\frac{n}{2}+\gamma}.
\end{equation*}
From the definition of the scattering operator \eqref{formau}, \eqref{scattering}, and the definition of the conformal fractional Laplacian we have that
\begin{equation*}
P^{k}_{\gamma}v_k=2^{2\gamma}\frac{\left|\Gamma(\tfrac{1}{2}+\tfrac{\gamma}{2}
+\tfrac{\sqrt{(\tfrac{n}{2}-1)^2-\mu_k}}{2}+\tfrac{\sqrt{\lambda^k}}{2}i)\right|^2}
{\left|\Gamma(\tfrac{1}{2}-\tfrac{\gamma}{2}
+\tfrac{\sqrt{(\tfrac{n}{2}-1)^2-\mu_k}}{2}+\tfrac{\sqrt{\lambda^k}}{2}i)\right|^2}
v.
\end{equation*}
Imposing the boundary condition
\eqref{conforlinearizada} and the value of $c_{n,\gamma}$ given in \eqref{cng}, the unknown $\lambda^k$ must be a solution of
\begin{equation}\label{eqlambdak}
\frac{\left|\Gamma(\tfrac{1}{2}+\tfrac{\gamma}{2}+\tfrac{\sqrt{(\tfrac{n}{2}-1)^2-\mu_k}}{2}
+\tfrac{\sqrt{\lambda^k}}{2}i)\right|^2}
{\left|\Gamma(\tfrac{1}{2}-\tfrac{\gamma}{2}+\tfrac{\sqrt{(\tfrac{n}{2}-1)^2-\mu_k}}{2}
+\tfrac{\sqrt{\lambda^k}}{2}i)\right|^2}
=\frac{n+2\gamma}{n-2\gamma}\frac{\left|\Gamma\left(\frac{1}{2}\left(\frac{n}{2}
+\gamma\right)\right)\right|^2}
{\left|\Gamma\left(\frac{1}{2}\left(\frac{n}{2}-\gamma\right)\right)\right|^2}.
\end{equation}
Note that for the canonical projection $k=0$, equality \eqref{eqlambdak} simplifies to
\begin{equation}\label{eqlambda2}
\frac{\left|\Gamma(\tfrac{n}{4}+\tfrac{\gamma}{2}+\tfrac{\sqrt{\lambda^0}}{2}i)\right|^2}
{\left|\Gamma(\tfrac{n}{4}-\tfrac{\gamma}{2}+\tfrac{\sqrt{\lambda^0}}{2}i)\right|^2}
=\frac{n+2\gamma}{n-2\gamma}\frac{\left|\Gamma\left(\frac{1}{2}\left(\frac{n}{2}+\gamma\right)\right)\right|^2}
{\left|\Gamma\left(\frac{1}{2}\left(\frac{n}{2}-\gamma\right)\right)\right|^2}.
\end{equation}
This equation \eqref{eqlambda2} lets us recover the value of $\lambda^0$ for the classical case $\gamma=1$. Indeed, using property \eqref{prop2g} we get $\lambda^0=n-2$ and we recover \eqref{L1}, where $L_1:=L^0(1).$

Going back to equation \eqref{eqlambdak} we can assert that the value of $\lambda^k$ can not be zero and it is unique for each $k$. Indeed if $\lambda=0$ we get a contradiction, and if $\lambda>0$ we may proceed as follows. Define
 $$F(\beta)=\frac{\frac{|\Gamma(\alpha_k+\beta i)|^2}{|\Gamma(\tilde{\alpha}_k+\beta i)|^2}}{\frac{n+2\gamma}{n-2\gamma}\frac{\left|\Gamma\left(\frac{1}{2}\left(\frac{n}{2}+\gamma\right)\right)
 \right|^2}{\left|\Gamma
 \left(\frac{1}{2}\left(\frac{n}{2}-\gamma\right)\right)\right|^2}},$$
 where
$$\alpha_k=\tfrac{1}{2}+\tfrac{\gamma}{2}+\tfrac{\sqrt{(\tfrac{n}{2}-1)^2-\mu_k}}{2},\quad
\tilde{\alpha}_k=\tfrac{1}{2}-\tfrac{\gamma}{2}+\tfrac{\sqrt{(\tfrac{n}{2}-1)^2-\mu_k}}{2}
\quad\text{and}\quad \beta=\tfrac{\sqrt{\lambda^k}}{2}.$$
Note that equation \eqref{eqlambdak} is written as $F(\beta)=1$, for some $\beta>0$.
We derive this expression with respect to $\beta$,
\begin{equation*}
(\log F(\beta))'=\frac{2}{\frac{n+2\gamma}{n-2\gamma}\frac{\left|\Gamma\left(\frac{1}{2}\left(\frac{n}{2}
+\gamma\right)\right)
 \right|^2}{\left|\Gamma
 \left(\frac{1}{2}\left(\frac{n}{2}-\gamma\right)\right)\right|^2}}
 \,\Im [\psi(\tilde{\alpha}_k+\beta i)-\psi(\alpha_k+\beta i)].
\end{equation*}
 Here $\Im$ represents the imaginary part of a complex number and $\psi(z)$ the Digamma function from Lemma \ref{propiedadesgamma}. We can use the expansion \eqref{propdg} to arrive at
\begin{equation*}
(\log F(\beta))'
=c\sum_{m=0}^{\infty}
\tfrac{\gamma\beta\left(2m+1+\sqrt{(\tfrac{n}{2}-1)^2-\mu_k}\right)}
{\left[\left(m+\tfrac{1}{2}+\tfrac{\sqrt{(\tfrac{n}{2}-1)^2-\mu_k}}{2}\right)^2
-{\beta}^2-\tfrac{\gamma^2}{4}\right]^2
+\left[\left(2m+1+\sqrt{(\tfrac{n}{2}-1)^2-\mu_k}\right)\beta\right]^2},
\end{equation*}
for some positive constant $c$. Therefore $F(\beta)$ is an increasing function of $\beta$.

Next, note that
$$\lim_{\beta\to +\infty} F(\beta)=+\infty,$$
for all $k=0,1,\ldots$. This follows easily writing
$$\frac{n+2\gamma}{n-2\gamma} F(\beta)=\frac{B\left(\alpha_k+\beta i, \frac{n}{4}-\frac{\gamma}{2}\right)}{B\left(\tilde \alpha_k+\beta i, \frac{n}{4}+\frac{\gamma}{2}\right)},$$
and the asymptotic behavior for the Beta function \eqref{propbeta} from Lemma \ref{propiedadesgamma}.

Now we look at the projection $k=0$. One immediately calculates
$$F(0)=\frac{n-2\gamma}{n+2\gamma}<1,$$
so there exists (and it is unique) a solution $\lambda^0=\lambda^0(\gamma)>0$ for the equation $F(\beta)=1$. From the proof one also gets that
$$\lim_{\gamma\to 1} \lambda^0(\gamma)=n-2.$$
This concludes the proof of Theorem \ref{theorem:linearization}.\\

We believe that, as in the classical case $F(\beta)=1$ does not have any positive solution for $k=1,2,\ldots$. This is a well supported conjecture that only depends on making more rigorous some numerical analysis. In order to motivate this conjecture, let us try to show that
 $f_k>1$ for $k=1,2,\ldots$, where we have defined
$$F(0)=\frac{(n-2\gamma)|\Gamma(\alpha_k)|^2\left|\Gamma
 \left(\frac{1}{2}\left(\frac{n}{2}-\gamma\right)\right)\right|^2}
 {(n+2\gamma)|\Gamma(\tilde{\alpha}_k)|^2
\left|\Gamma\left(\frac{1}{2}\left(\frac{n}{2}+\gamma\right)\right)
 \right|^2}=:f_k.$$
Using the same ideas as above, one checks that $f_k$ is an increasing function of $k$, and it is enough to show that
 $$f_1= \frac{(n-2\gamma)\left|\Gamma\left(\frac{1}{2}+\frac{\gamma}{2}+\frac{n}{4}\right) \Gamma\left(\frac{n}{4}-\frac{\gamma}{2}\right) \right|^2}{(n+2\gamma)\left|\Gamma\left(\frac{1}{2}-\frac{\gamma}{2}+\frac{n}{4}\right) \Gamma\left(\frac{n}{4}+\frac{\gamma}{2}\right) \right|^2}=\frac{n-2\gamma}{n+2\gamma}X(n,\gamma)^{-2}>1,$$
where $X(n,\gamma)$ is defined in Lemma \ref{Xcrecimiento}. We have numerically observed that $f_1=f_1(\gamma)$ is an increasing function in $\gamma$. Since for $\gamma=0$ we already have that  $f_1(0)=1$, we would conclude that $f_k> f_1\geq 1$, as desired.
\section*{Acknowledgement}
The authors would like to thank the hospitality of Princeton University, where part of this work was carried out. They also thank Xavier Cabr{\'e}, Marco Fontelos, Robin Graham, Xavier Ros-Oton and Joan Sol{\`a}-Morales for their useful conversations and suggestions.
\bibliographystyle{abbrv}

\end{document}